\declaretheorem[numberwithin=section]{theorem}
\declaretheorem[numberwithin=section]{lemma}
\declaretheorem[numberwithin=section]{corollary}
\declaretheorem[numberwithin=section]{problem}
\declaretheorem[numberwithin=section]{conjecture}
\declaretheorem[sibling=theorem, style=remark]{remark}
\declaretheorem[sibling=theorem, style=remark]{example}
\declaretheorem[sibling=theorem, style=remark]{definition}
\numberwithin{equation}{section}
\newcommand{\op}[1]{\operatorname{#1}}
\begin{document}
	\title[Rigidity for Einstein manifolds] {Rigidity for Einstein manifolds under bounded covering geometry}
	
	
	\author{Cuifang Si}
	\address{School of Mathematical Sciences, Capital Normal Universiy, Beijing China}
	\email{2210501004@cnu.edu.cn}
	\author{Shicheng Xu}
	\address{School of Mathematical Sciences, Capital Normal Universiy, Beijing China}
	\address{Academy for Multidisciplinary Studies, Capital Normal University, Beijing  China}
	\email{shichengxu@gmail.com}

	\subjclass[2020]{53C23, 53C21, 53C20, 53C24}
	
	\keywords{Einstein, rigidity, almost nonnegative Ricci curvature, bounded covering geometry, space forms}
	
	\date{May 31, 2025}
	
	\maketitle
	
	\begin{center}
		\emph{Dedicated to Professor Xiaochun Rong for his 70th birthday}
	\end{center}
	
		\begin{abstract}
		In this note we prove three rigidity results for Einstein manifolds with bounded covering geometry. 
		(1) An almost flat manifold $(M,g)$ must be flat if it is Einstein, i.e. $\op{Ric}_g=\lambda g$ for some real number $\lambda$.
		(2) A compact Einstein manifolds with a non-vanishing and almost maximal volume entropy is hyperbolic. 
		(3) A compact Einstein manifold admitting a uniform local rewinding almost maximal volume is isometric to a space form. 
	\end{abstract}

	\section{Introduction}\label{section-0}
	Several years ago
	Xiaochun Rong proposed a program that aims to study the geometry and topology of manifolds with lower bounded Ricci curvature and local bounded covering geometry \cite{HKRX2020}, see the survey paper \cite{HRW2020}. Following the program, we prove three rigidity results for Einstein manifolds under local bounded covering geometry.
	
	The first one is that an almost flat manifold must be flat if it is Einstein.
	
	Let us recall that Gromov's theorem (\cite{Gromov1978}, \cite{Ruh1982}) on almost flat manifolds says that for any positive integer $n>0$, there are $\epsilon(n), C(n)>0$ such that for any compact $n$-manifold $(M,g)$, if the rescaling invariant $$\op{diam}(M,g)^2\cdot \max |\op{Sec}_g|< \epsilon(n),$$ then $M$ is diffeomorphic to a infra-nilmanifold $N/\Gamma$, where $N$ is a simply connected nilpotent group, and $\Gamma$ is a subgroup of affine group $N\rtimes\op{Aut}(N)$ of $N$ such that $[\Gamma:\Gamma\cap N]\le C(n)$.
	
	There are many compact Ricci-flat manifolds including complex $K3$ surfaces and $G2$ manifolds. Though it generally fails, Gromov's theorem, however, still holds at the level of fundamental group of manifolds under lower bounded Ricci curvature. That is, there is $\epsilon(n), C(n)>0$ such that for any compact $n$-manifold $(M,g)$ of almost non-negative Ricci curvature, i.e., $$\op{diam}(M,g)^2\cdot \op{Ric}_g>-\epsilon(n),$$ its fundamental group $\pi_1(M)$ is virtually $C(n)$-nilpotent, i.e. $\pi_1(M)$ contains a nilpotent subgroup $N$ with index $[\pi_1(M):N]\le C(n)$. It was originally conjectured by Gromov \cite{GLP1981}, and proved by Kapovitch-Wilking \cite{KW}, which now is called the generalized Margulis lemma. In fact, the polycyclic rank of $\pi_1(M)$ is well-defined and is no more than $n$, which is that of any finite-indexed nilpotent subgroup $N$ (cf. \cite[\S 2.4]{NaberZhang2016}), i.e., the number of $\mathbb Z$ appears in a polycyclic series as cyclic factors.
	
	Recently a criterion for almost flat manifold theorem under lower bounded Ricci curvature was proved by Huang-Kong-Rong-Xu \cite{HKRX2020}. Combining with Naber-Zhang \cite{NaberZhang2016}, we have the following result.
	
	\begin{theorem}[Almost flat theorem under lower bounded Ricci curvature, \cite{HKRX2020}, \cite{NaberZhang2016}]\label{thm-almostflat}
		There is $\epsilon(n)>0, v(n)>0$ such that for any $n$-manifold
		$(M,g)$ of Ricci curvature $\ge -(n-1)$ and $\op{diam}(M,g)< \epsilon(n)$, the followings are equivalent:
		\begin{enumerate}
			\item $M$ is diffeomorphic to a infra-nil manifold;
			\item the polycyclic rank of $\pi_1(M)$ is equal to $n$;
			\item $(M,g)$ satisfies $(1,v(n))$-bounded covering geometry, i.e. 
			$\op{vol} B_1(\tilde x)\ge v(n)>0$, where $\tilde x$ is a preimage point of $x$ in the Riemannian universal cover $\widetilde{M}=\widetilde{B_1(x)}$.
		\end{enumerate}
	\end{theorem}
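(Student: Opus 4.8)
The plan is to establish the cyclic chain of implications $(1)\Rightarrow(2)\Rightarrow(3)\Rightarrow(1)$, so that each cited result is used exactly once: Mal'cev theory for the first link, the structure theory of \cite{NaberZhang2016} for the second, and the almost-flat criterion of \cite{HKRX2020} for the third. The link $(1)\Rightarrow(2)$ is purely algebraic. If $M$ is diffeomorphic to an infra-nilmanifold $N/\Gamma$ with $\dim N=n$, then $\pi_1(M)=\Gamma$ contains the lattice $\Lambda=\Gamma\cap N$ as a subgroup of finite index, and $\Lambda$ is a finitely generated torsion-free nilpotent group. By Mal'cev's correspondence between lattices and simply connected nilpotent Lie groups, the Hirsch (polycyclic) rank of such a lattice equals $\dim N=n$. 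Since the polycyclic rank is a commensurability invariant, hence unchanged under passage to a finite-index subgroup, the polycyclic rank of $\pi_1(M)$ is $n$.

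For $(2)\Rightarrow(3)$ I would argue by contradiction through equivariant Gromov--Hausdorff convergence. Suppose the conclusion fails along a sequence $(M_i,g_i)$ with $\op{Ric}_{g_i}\ge-(n-1)$, $\op{diam}(M_i,g_i)\to 0$, and polycyclic rank of $\pi_1(M_i)$ equal to $n$, yet $\op{Vol}B_1(\tilde x_i)\to 0$ in the universal covers. After rescaling to unit diameter, pass to an equivariant limit $(\widetilde M_i,\tilde x_i,\Gamma_i)\to(Y,y,G)$, where $G$ acts transitively on $Y$ because the quotients collapse to a point. By \cite{NaberZhang2016} the limit $Y$ is a simply connected nilpotent Lie group, $G$ its left-translations, and for $i$ large the polycyclic rank of $\Gamma_i$ equals $\dim G=\dim Y$. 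The hypothesis $\op{Vol}B_1(\tilde x_i)\to 0$ forces $\dim Y<n$, hence rank $<n$ for large $i$, contradicting the rank-$n$ assumption. Therefore a uniform lower bound $\op{Vol}B_1(\tilde x)\ge v(n)>0$ must hold, which is $(3)$.

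The remaining implication $(3)\Rightarrow(1)$ invokes the criterion of \cite{HKRX2020} directly. The bound $\op{Vol}B_1(\tilde x)\ge v(n)$ says the universal cover is non-collapsed at unit scale while $M$ itself is collapsed by the small-diameter hypothesis; this is precisely the regime of a collapse with bounded local covering geometry, in which a nilpotent Killing (or $N$-) structure on $M$ can be produced. The maximal-rank datum then forces this structure to fill all $n$ directions, so $M$ is diffeomorphic to an infra-nilmanifold.

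The main obstacle is Step $(2)\Rightarrow(3)$: the polycyclic rank is an algebraic invariant of $\pi_1(M)$, whereas bounded covering geometry is a metric condition on the universal cover, so the entire bridge rests on identifying, in the equivariant limit, the Hirsch rank of $\Gamma_i$ with the dimension of the limit nilpotent group $G$, together with the dichotomy that this dimension equals $n$ exactly when the covers remain non-collapsed. Making the semicontinuity of rank under equivariant convergence precise and excluding an intermediate collapse of the covers is the delicate point, and it is here that the $\epsilon$-regularity of \cite{NaberZhang2016} carries the weight.
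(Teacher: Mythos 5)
Your proposal matches the paper's treatment: the paper establishes the theorem by exactly the same cyclic decomposition, with $(1)\Rightarrow(2)$ following trivially from the Mal'cev/lattice structure of infra-nilmanifold fundamental groups, $(2)\Rightarrow(3)$ quoted from \cite[Proposition 5.9]{NaberZhang2016}, and $(3)\Rightarrow(1)$ quoted from \cite[Theorem A]{HKRX2020}. The only difference is cosmetic: where you sketch an equivariant Gromov--Hausdorff contradiction argument for $(2)\Rightarrow(3)$, the paper simply cites Proposition 5.9, and the identity your sketch hinges on (polycyclic rank of $\Gamma_i$ equals the dimension of the limit group/space) is precisely the content of that proposition rather than an independent route to it.
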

	
	In the above theorem, it is well-known by the structure of nilpotent Lie group that the polycyclic rank of $N\cap \pi_1(M)$ is equal to $n$. Hence, (1) trivially implies (2). It was proved in \cite[Theorem A]{HKRX2020} that (3) implies (1), and by \cite[Proposition 5.9]{NaberZhang2016}, (2) implies (3).
	
	Theorem \ref{thm-almostflat} can be viewed as a natural extension of Colding's maximal Betti number theorem \cite{Colding1997}, i.e, under the condition of Theorem \ref{thm-almostflat}, $M$ is diffeomorphic to a $n$-torus if and only if its first Betti number equals $n$. 
	
	In this note, we prove that if in addition $(M,g)$ is Einstein in Theorem \ref{thm-almostflat}, then $(M,g)$ must be flat.
	
	\begin{theorem}[Rigidity for almost nonnegative Ricci curvature]\label{thm-rigidity-almostflat}
		There is $\epsilon(n)>0, v(n)>0$ such that for any Einstein $n$-manifold
		$(M,g)$ with $\op{Ric}_g=\lambda g$ and $\op{diam}(M,g)^2\cdot \lambda>-\epsilon(n)$, the followings are equivalent:
		\begin{enumerate}
			\item $M$ is diffeomorphic to a infra-nilmanifold;
			\item the polycyclic rank of $\pi_1(M)$ is equal to $n$;
			\item $(M,g)$ satisfies $(1,v(n))$-bounded covering geometry after a rescaling on the metric $g$ such that $\op{diam}(M,g)=\epsilon(n)$;
			\item $M$ is flat.
		\end{enumerate}
	\end{theorem}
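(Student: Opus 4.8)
The plan is to close the cycle of implications. The chain $(2)\Leftrightarrow(3)\Leftrightarrow(4)$ is essentially furnished by Theorem \ref{thm-almostflat}, and $(1)\Rightarrow(2)$ is immediate since a flat compact manifold is a Bieberbach manifold, which is an infra-nilmanifold with abelian nilpotent part $N=\mathbb R^n$. Thus the only genuinely new implication I must supply is $(2)\Rightarrow(1)$: an Einstein manifold diffeomorphic to an infra-nilmanifold is flat. To invoke Theorem \ref{thm-almostflat} I first rescale into its normalization. Since $\op{diam}^2\cdot\lambda$ is scale invariant, when $\lambda<0$ I take $c^2=-\lambda/(n-1)$, so that $\op{Ric}_{c^2g}=-(n-1)\,c^2g$ and the rescaled squared diameter is $\frac{-\lambda\,\op{diam}(M,g)^2}{n-1}<\frac{\epsilon(n)}{n-1}$; when $\lambda\ge 0$ the bound $\op{Ric}\ge-(n-1)$ holds for free and I rescale the diameter to be small. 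Choosing $\epsilon(n)\le(n-1)\,\epsilon_0(n)$, where $\epsilon_0(n)$ is the threshold of Theorem \ref{thm-almostflat}, makes $(2)\Leftrightarrow(3)\Leftrightarrow(4)$ available after routine bookkeeping matching the non-collapsing constants across the rescalings.

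To prove $(2)\Rightarrow(1)$ I would split on the sign of $\lambda$. If $\lambda>0$, Bonnet--Myers forces $\pi_1(M)$ to be finite, contradicting that its polycyclic rank equals $n\ge 1$; hence $\lambda\le 0$. If $\lambda=0$, then $M$ is compact and Ricci-flat, and the Cheeger--Gromoll splitting theorem provides a finite normal cover isometric to $T^k\times C$ with $C$ compact, simply connected and Ricci-flat. The polycyclic rank of $\pi_1(M)$ is then $k$, so condition (3) forces $k=n$, whence $\dim C=0$, the cover is a flat torus $T^n$, and $M$ is flat.

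The remaining and hardest case is $\lambda<0$, which I would rule out by a compactness/collapsing argument. If the asserted constants did not exist, there would be a sequence $(M_i,g_i)$ of Einstein $n$-manifolds with $\op{Ric}_{g_i}=-(n-1)g_i$ satisfying (3) but not flat, and with $\op{diam}(M_i,g_i)\to 0$. By $(3)\Leftrightarrow(4)$ the Riemannian universal covers $(\widetilde M_i,\tilde p_i)$ are non-collapsed, $\op{Vol}B_1(\tilde p_i)\ge v(n)$, while remaining Einstein with $\op{Ric}=-(n-1)$. By the regularity and compactness theory for non-collapsed Einstein manifolds (Anderson, Cheeger--Colding--Tian, Cheeger--Naber) a subsequence converges in the pointed equivariant Gromov--Hausdorff sense, together with the deck groups $\Gamma_i=\pi_1(M_i)$, to $(\widetilde M_\infty,\tilde p_\infty)$ acted on by a limit Lie group $G_\infty$. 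Since each $\Gamma_i$ is virtually nilpotent, the identity component $G_\infty^0$ is a simply connected nilpotent Lie group (Fukaya--Yamaguchi, Cheeger--Fukaya--Gromov), and because $\widetilde M_i/\Gamma_i=M_i$ collapses to a point we get $\widetilde M_\infty/G_\infty=\{\op{pt}\}$, so $G_\infty^0$ acts transitively and isometrically. Homogeneity forces $\widetilde M_\infty$ to be smooth, as the codimension $\ge 4$ singular set is isometry-invariant, hence empty. Since $\widetilde M_\infty$ is $n$-dimensional (non-collapsing) and $G_\infty^0$ is simply connected nilpotent, the compact isotropy is trivial, so $G_\infty^0$ acts simply transitively and $\widetilde M_\infty$ carries a left-invariant Einstein metric of a nilpotent Lie group. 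By Milnor's theorem on left-invariant metrics, such a metric is flat, contradicting $\op{Ric}=-(n-1)<0$. This rules out $\lambda<0$ and completes $(2)\Rightarrow(1)$.

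I expect the principal obstacle to be this collapsing step: controlling the pointed equivariant limit of the non-collapsed Einstein universal covers together with their fundamental group actions, verifying that the limit group is nilpotent and acts transitively, and justifying the absence of singularities through homogeneity. Once the limit is identified as a nilpotent Lie group with a left-invariant Einstein metric, Milnor's rigidity delivers the contradiction, while the cases $\lambda>0$ and $\lambda=0$ are comparatively routine consequences of Bonnet--Myers and Cheeger--Gromoll.
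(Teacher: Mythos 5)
Your proposal is correct and, in its decisive case, follows the same route as the paper: for $\lambda<0$ you run exactly the paper's Case 2 argument --- normalize $\op{Ric}_g=-(n-1)g$ so the diameters collapse, take an equivariant pointed Gromov--Hausdorff limit of the non-collapsed universal covers together with the deck groups, show the identity component of the limit group is nilpotent and acts transitively and freely, upgrade the limit to a smooth Einstein manifold by homogeneity plus Einstein regularity, and contradict $\op{Ric}=-(n-1)$ via Milnor's theorem on left-invariant metrics on nilpotent Lie groups. Where you genuinely diverge is the Ricci-flat case: the paper handles it (its Case 1) by another limiting argument --- splitting $\tilde M_i=\mathbb R^{k_i}\times Y_i$, bounding $\op{diam}(Y_i)$ via rescaling, and deriving a contradiction from a $\pi_1$-surjection onto a limit torus (Tuschmann) --- whereas you argue directly on a single manifold: Cheeger--Gromoll gives a finite cover splitting as $T^k\times C$ with $C$ compact simply connected, the polycyclic (Hirsch) rank of $\pi_1(M)$ is then $k$, so condition (3) forces $k=n$, hence $C$ is a point and $M$ is flat. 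Your version is simpler, needs no compactness or contradiction for this case, and is a genuine streamlining. Two small repairs are in order: (i) your assertion that $G_\infty^0$ is \emph{simply connected} nilpotent is neither justified by the cited references nor needed; to kill the compact isotropy, argue as the paper does --- in a connected nilpotent Lie group every compact subgroup lies in the center, so the isotropy group is normal, hence by transitivity acts trivially, hence is trivial by effectiveness; (ii) in the rescaling bookkeeping, after normalizing $\op{Ric}=-(n-1)$ the diameter bound reads $\op{diam}^2<\epsilon(n)/(n-1)$, so to invoke Theorem \ref{thm-almostflat} you need $\epsilon(n)\le (n-1)\,\epsilon_0(n)^2$ (the threshold squared), not $(n-1)\,\epsilon_0(n)$; this is a trivial fix.
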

	
	Note that by Milnor \cite{Milnor1976}, any left invariant metric on a non-abelian nilpotent group is not Einstein. At the same time, there are many almost flat metrics which are not left invariant. It follows form Theorem \ref{thm-rigidity-almostflat} all ``sufficiently'' almost flat manifolds must be flat if it is Einstein. 
	
	\begin{corollary}\label{cor-almost-flat-Einstein-mfd}
		There is $ \epsilon(n)>0 $ such that for any almost flat $n$-manifold $ (M,g) $, if 
		$$\op{diam}(M,g)^2\cdot \max |\op{Sec}_g|< \epsilon(n),$$
		and $(M,g)$ is Einstein, then $(M,g)$ is flat.
	\end{corollary}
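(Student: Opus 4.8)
The plan is to obtain Corollary \ref{cor-almost-flat-Einstein-mfd} as a direct consequence of Theorem \ref{thm-rigidity-almostflat}, after checking that an almost flat Einstein manifold meets the hypotheses of that theorem together with its condition (2). Let $\epsilon_0(n)$ and $v(n)$ denote the constants furnished by Theorem \ref{thm-rigidity-almostflat}, and let $\epsilon_G(n)$ be the threshold appearing in Gromov's almost flat theorem recalled in the introduction. I would take the constant $\epsilon(n)$ in the corollary to be
$$\epsilon(n)=\min\left\{\epsilon_G(n),\ \frac{\epsilon_0(n)}{n-1}\right\}.$$

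First I would record the elementary pinching estimate relating the Einstein constant to the sectional curvature. Since $\op{Ric}_g=\lambda g$ and the Ricci curvature in any unit direction is a sum of $n-1$ sectional curvatures of mutually orthogonal planes, one has $|\lambda|\le (n-1)\max|\op{Sec}_g|$. Multiplying by $\op{diam}(M,g)^2$ and invoking the almost flat hypothesis gives
$$\op{diam}(M,g)^2\cdot|\lambda|\le (n-1)\,\op{diam}(M,g)^2\cdot\max|\op{Sec}_g|<(n-1)\,\epsilon(n)\le \epsilon_0(n).$$
In particular $\op{diam}(M,g)^2\cdot\lambda>-\epsilon_0(n)$, so the diameter--Ricci hypothesis of Theorem \ref{thm-rigidity-almostflat} is satisfied.

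Next, since $\epsilon(n)\le\epsilon_G(n)$, Gromov's almost flat theorem applies and shows that $M$ is diffeomorphic to an infra-nilmanifold; this is precisely condition (2) of Theorem \ref{thm-rigidity-almostflat}. Because $(M,g)$ is Einstein and satisfies the diameter--Ricci bound verified in the previous paragraph, the equivalences of that theorem are in force, and the implication (2)$\Rightarrow$(1) yields that $M$ is flat.

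There is no serious obstacle here: the corollary is essentially a bookkeeping consequence of Theorem \ref{thm-rigidity-almostflat}. The only points requiring attention are the curvature bound $|\lambda|\le(n-1)\max|\op{Sec}_g|$ and the choice of $\epsilon(n)$ small enough that both Gromov's theorem and the hypothesis of Theorem \ref{thm-rigidity-almostflat} are triggered simultaneously; both are routine.
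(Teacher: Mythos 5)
Your proposal is correct and follows exactly the route the paper intends: the corollary is stated as an immediate consequence of Theorem \ref{thm-rigidity-almostflat}, with Gromov's almost flat theorem supplying condition (2) (infra-nilmanifold structure) and the elementary bound $|\lambda|\le(n-1)\max|\op{Sec}_g|$ ensuring the diameter--Ricci hypothesis, after which (2)$\Rightarrow$(1) gives flatness. The paper leaves these bookkeeping details implicit; your write-up simply makes them explicit.
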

	
	If in addition, the first Betti number of $ M $ in Corollary \ref{cor-almost-flat-Einstein-mfd} is equal to $ n $, then we have the following rigidity.
	\begin{corollary}[Rigidity for Colding’s torus stablity]
		There is $ \epsilon(n)>0$, $ v(n)>0 $ such that for any Einstein $ n $-manifold $ (M,g) $ with $ \op{Ric}_g=\lambda g $ and $ \op{diam}(M,g)^2\cdot\lambda>-\epsilon(n) $, the followings are equivalent:
		\begin{enumerate}
			\item $ M $ is a flat torus;
			\item the first Betti number of $ M $ is equal to $ n $.
		\end{enumerate}
	\end{corollary}
	
	Note that any Einstein metric on $ T^4 $ is flat; see \cite{Besse1987}. However, it is unknown for $ T^n $, $ n\ge 5 $.
	
	The second rigidity in this paper is for Einstein manifolds with almost maximal non-vanishing volume entropy. 
	
	Let us recall that for a compact Riemannian $n$-manifold $(M,g)$, its volume entropy is defined to be
	$$h(M,g)=\lim_{R\to +\infty}\frac{\ln \op{vol}B_R(\tilde x)}{R},$$
	where $\tilde x$ is an arbitrary fixed point in the Riemannian universal cover $(\tilde M,\tilde g)$. By Manning \cite{Manning1979}, $h(M,g)$ is well-defined and does not depend on the choice of $\tilde x$. If $(M,g)$ has Ricci curvature $\op{Ric}_g\ge -(n-1)$, then by Bishop's volume comparison $h(M,g)\le (n-1)$.
	By Ledrappier-Wang \cite{Ledrappier-Wang2010} (cf. Liu \cite{Liu2011}), equality holds if and only if $(M^n,g)$ is isometric to a hyperbolic manifold $\mathbb{H}^n/\Gamma$. 
	
	The following quantitative rigidity was proved by Chen-Rong-Xu \cite{CRX2019} for manifolds with negatively lower bounded Ricci curvature and almost maximal volume entropy. We will use $\varkappa(\epsilon|n,D)$ to denote a positive function depends on $\epsilon$, $n$, $D$ such that it converges to zero as $\epsilon\to 0$ with other parameters fixed.
	
	\begin{theorem}[Quantitative rigidity for almost maximal volume entropy \cite{CRX2019}, cf. \cite{CX2024}]\label{thm-almost-rigid}
		There exists $\epsilon(n, D)>0$ such that for $0<\epsilon<\epsilon(n, D)$, if a compact Riemannian $n$-manifold $(M,g)$ with $\op{Ric}_g\ge -(n-1)$ satisfies 
		$$h(M,g)\geq n-1-\epsilon, \quad \op{diam}(M,g)\leq D,$$
		then $(M,g)$ is diffeomorphic and $\varkappa(\epsilon | n, D)$-Gromov-Hausdorff close to a hyperbolic $n$-manifold $\mathbb{H}^n/\Gamma$. 
		
		Conversely, if $(M,g)$ is $\epsilon$-Gromov-Hausdorff close to a $\mathbb{H}^n/\Gamma$, then $$|h(M,g)-(n-1)|\le \varkappa(\epsilon|n,D).$$
	\end{theorem}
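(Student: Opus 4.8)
The plan is to establish both implications by a compactness-and-contradiction argument based on Gromov's precompactness theorem, with the equality rigidity of Ledrappier--Wang supplying the structure of the limit. For the forward direction, suppose the conclusion fails uniformly: there are $\delta_0>0$ and compact $n$-manifolds $(M_i,g_i)$ with $\op{Ric}_{g_i}\ge-(n-1)$, $\op{diam}(M_i,g_i)\le D$, and $h(M_i,g_i)\ge n-1-\epsilon_i$ with $\epsilon_i\to0$, none of which is $\delta_0$-Gromov--Hausdorff close to, or diffeomorphic to, a hyperbolic $n$-manifold. Fixing base points $x_i\in M_i$ and lifts $\tilde x_i\in(\tilde M_i,\tilde g_i)$, Gromov precompactness lets me pass to a subsequence for which the triples $(\tilde M_i,\tilde x_i,\Gamma_i)$, with $\Gamma_i=\pi_1(M_i)$ acting by deck transformations, converge in the pointed equivariant Gromov--Hausdorff topology to a limit $(Y,y_\infty,G)$, so that $(M_i,g_i)\to Y/G=:X$.

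The first substantive step is to identify the volume entropy with orbit growth and to rule out collapse. Since each $\tilde M_i$ is tiled by fundamental domains of diameter $\le D$, one has
$$h(M_i,g_i)=\lim_{R\to\infty}\frac1R\log\#\{\gamma\in\Gamma_i:d(\tilde x_i,\gamma\tilde x_i)\le R\},$$
so that $h(M_i,g_i)\to n-1$ says the orbits of $\Gamma_i$ grow at nearly the maximal hyperbolic rate. I would then argue that a genuine collapse ($\dim Y<n$) is incompatible with this: a collapsing direction contributes only subexponential orbit growth and, via the nilpotent structure of the almost-collapsing part of $\Gamma_i$ furnished by Cheeger--Fukaya--Gromov, would force $\limsup_i h(M_i,g_i)<n-1$. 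Hence the convergence is non-collapsed, $Y$ has dimension $n$, and $Y$ is an $\op{RCD}(-(n-1),n)$ limit space on which Cheeger--Colding regularity theory is available.

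The heart of the matter is to show $Y$ is isometric to $\mathbb H^n$. Using the uniform Bishop--Gromov monotonicity of $\op{Vol}B_R(\tilde x_i)/V_{-1}(R)$, where $V_{-1}(R)$ is the volume of an $R$-ball in $\mathbb H^n$, together with the continuity of volumes of balls under non-collapsed convergence, the almost-maximal-entropy hypothesis transfers to the statement that the limit $(Y,y_\infty,G)$ realizes the maximal volume entropy $n-1$. Applying the Ledrappier--Wang rigidity in the form valid on the limit then yields $Y=\mathbb H^n$ with $G$ a discrete cocompact group of isometries, so $X=\mathbb H^n/G$ is hyperbolic. Finally, non-collapsed convergence to a smooth space upgrades to $C^{1,\alpha}$ convergence, producing for large $i$ a diffeomorphism $M_i\to X$ that is $\delta_0$-Gromov--Hausdorff close --- contradicting the choice of the sequence. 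The dependence $\varkappa(\epsilon\mid n,D)$ follows from this scheme in the usual way.

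The main obstacle is the interchange of two limits: the entropy $h(M_i,g_i)$ is an asymptotic ($R\to\infty$) invariant of each fixed cover, whereas Gromov--Hausdorff convergence only controls balls of bounded radius. Reconciling them requires the uniform Bishop--Gromov monotonicity to convert the growth rate at infinity into finite-scale volume ratios on $Y$, and care that $G$ inherits the orbit-growth rate of the $\Gamma_i$; the collapse analysis above is the delicate point. The converse direction is softer: if $(M,g)$ is $\epsilon$-Gromov--Hausdorff close to some $\mathbb H^n/\Gamma$, then $h(M,g)\le n-1$ is automatic from Bishop--Gromov, while $h(M,g)\ge n-1-\varkappa(\epsilon\mid n,D)$ follows from the orbit-growth characterization and Colding's volume continuity on the non-collapsed limit; otherwise a compactness argument would produce a space isometric to a hyperbolic manifold yet of entropy strictly below $n-1$, which is impossible.
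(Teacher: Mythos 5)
First, a point of comparison: the paper does not prove this theorem at all --- it is quoted from Chen--Rong--Xu \cite{CRX2019} (cf. \cite{CX2024}) and used as a black box. So your proposal must be measured against the proof in \cite{CRX2019}, and there it has a genuine gap, located exactly at the step you yourself flag as the ``main obstacle.'' The mechanism you propose to resolve it does not work. Bishop--Gromov monotonicity says that $f(R)=\op{Vol}B_R(\tilde x)/V_{-1}(R)$ is non-increasing, so \emph{small}-scale ratios dominate \emph{large}-scale ratios; but almost-maximal entropy gives no lower bound on $f$ at any fixed scale, because entropy is blind to multiplicative (indeed sub-exponential) factors in volume growth: a universal cover with $\op{Vol}B_R(\tilde x)=\tfrac12 V_{-1}(R)$, or $V_{-1}(R)/(1+R)$, for all $R\ge 1$ has volume entropy exactly $n-1$. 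Hence neither your non-collapsing claim nor the statement that ``the almost-maximal-entropy hypothesis transfers to the limit'' follows from volume comparison plus volume continuity; that transfer is precisely the hard content of \cite{CRX2019}. There the logic runs in the opposite order: the key lemma is that $h(M,g)\ge n-1-\epsilon$ together with $\op{diam}\le D$ forces $\op{Vol}B_1(\tilde x)\ge (1-\varkappa(\epsilon|n,D))\op{Vol}B_1^{-1}$ at \emph{every} point of the universal cover, and this is proved by a quantitative version of the Ledrappier--Wang argument (almost maximality $\Delta b\approx n-1$ for Busemann-type functions, Patterson--Sullivan-type measures) combined with Cheeger--Colding almost-warped-product rigidity, not by Bishop--Gromov. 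Non-collapsing, discreteness of the limit group $G$, and the identification of the limit with $\mathbb H^n$ are then consequences of that lemma, not inputs to it.

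There are secondary gaps as well. (a) Your collapse-exclusion invokes Cheeger--Fukaya--Gromov theory, which requires two-sided sectional curvature bounds; under $\op{Ric}\ge-(n-1)$ only the Kapovitch--Wilking generalized Margulis lemma is available, and even granting it, ``a collapsing direction contributes only subexponential orbit growth, hence $\limsup_i h<n-1$'' is asserted, not proved. (b) Ledrappier--Wang rigidity is a theorem about smooth Riemannian manifolds; your limit $Y$ is a priori only a Ricci-limit space, so one cannot ``apply it in the form valid on the limit'' without first establishing regularity of $Y$ or invoking a synthetic (RCD) version as in \cite{CX2024}. (c) The converse direction suffers from the same limit-interchange problem you identified in the forward direction: Gromov--Hausdorff closeness controls bounded scales while $h$ is asymptotic, and lower semicontinuity of $h$ under GH convergence is not automatic --- quasi-isometric control of orbits distorts exponential growth rates by multiplicative constants, so ``orbit-growth characterization plus Colding's volume continuity'' does not by itself yield $h(M,g)\ge n-1-\varkappa(\epsilon|n,D)$.
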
	
	
	Note that it was proved by Gromov-Thurston \cite{Gromov-Thurston87} that, for any integer $n\ge 4$ and any positive constant $\delta$, there exists a compact Riemannian $n$-manifold $(M, g)$ such 
	that the sectional curvature of $(M,g)$ satisfies $$-1-\delta \le  \op{Sec}_g \le -1,$$ 
	while the manifold $M$ does not admit any hyperbolic metric.
	More recently, Schroeder-Shah \cite{Schroeder-Shah2018} proved that for every $n\ge 3$ and $\epsilon>0$, there exists $v>0$ and a compact Riemannian $n$-manifold $M$ such that 
	$$-1\le \op{Sec}_g\le 0, \quad \op{vol}(M,g)\le v, \quad h(M,g)\ge (n-1)-\epsilon,$$ and $M$ does not admit any hyperbolic metric.	
	
	The next theorem is a gap phenomena of maximal volume entropy for Einstein manifolds,  which follows from Theorem \ref{thm-almost-rigid} and the fact that any hyperbolic metric is locally rigid \cite[\S 12.73]{Besse1987}, \cite{Koiso1978}.
	\begin{theorem}[Gap for non-vanishing and maximal volume entropy]\label{thm-gap}
		For any integer $n\ge 3$ and $D>0$, there is $\epsilon(n,D)>0$ such that if a compact Einstein Riemannian $n$-manifold $(M,g)$ with $\op{Ric}_g\ge -(n-1)$ satisfies 
		$$h(M,g)\ge n-1-\epsilon(n,D), \quad \op{diam}(M,g)\leq D,$$
		then $h(M,g)=n-1$ and $(M,g)$ is hyperbolic.
	\end{theorem}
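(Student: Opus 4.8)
The plan is to argue by contradiction through a compactness argument, upgrading the Gromov--Hausdorff closeness supplied by Theorem~\ref{thm-almost-rigid} to $C^\infty$-convergence of Einstein metrics and then invoking the local rigidity of hyperbolic metrics. Suppose the assertion fails for some $n\ge 3$ and $D>0$. Then there is a sequence of Einstein $n$-manifolds $(M_i,g_i)$ with $\op{Ric}_{g_i}=\lambda_i g_i\ge -(n-1)g_i$, $\op{diam}(M_i,g_i)\le D$ and $h(M_i,g_i)\ge n-1-\epsilon_i$ where $\epsilon_i\to 0$, none of which is hyperbolic. Since $n\ge 3$, for large $i$ the entropy is close to $n-1\ge 2>0$; as $\lambda_i\ge 0$ would force $\op{Ric}_{g_i}\ge 0$ and hence, by Bishop--Gromov, at most polynomial volume growth of the universal cover and $h(M_i,g_i)=0$, we must have $\lambda_i\in[-(n-1),0)$ for large $i$. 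In particular $|\op{Ric}_{g_i}|\le n-1$, and after passing to a subsequence $\lambda_i\to\lambda_\infty\in[-(n-1),0]$.

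First I would record the necessary non-collapsing. By Theorem~\ref{thm-almost-rigid} each $(M_i,g_i)$ is diffeomorphic and $\varkappa(\epsilon_i\mid n,D)$-Gromov--Hausdorff close to a compact hyperbolic manifold $N_i=\mathbb{H}^n/\Gamma_i$, whose diameter is then bounded by some $D'=D'(n,D)$. By the Margulis lemma and the thick--thin decomposition, compact hyperbolic $n$-manifolds of diameter $\le D'$ have uniformly bounded geometry; in particular their injectivity radius and volume are bounded below by positive constants depending only on $n$ and $D'$. Hence the $(M_i,g_i)$ are uniformly non-collapsed, $\op{Vol}(M_i,g_i)\ge v(n,D)>0$, and a subsequence of the $N_i$ converges smoothly to a compact hyperbolic manifold $N_\infty=\mathbb{H}^n/\Gamma_\infty$, which is simultaneously the Gromov--Hausdorff limit of the $(M_i,g_i)$.

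Next I would promote this to smooth convergence. Since the $g_i$ are Einstein with $|\op{Ric}_{g_i}|\le n-1$, $\op{diam}\le D$ and $\op{Vol}\ge v(n,D)>0$, Anderson's $\varepsilon$-regularity and convergence theorem, together with the codimension-four regularity theory for non-collapsed limits of manifolds with bounded Ricci curvature (Cheeger--Colding, Cheeger--Naber), shows that a subsequence of $(M_i,g_i)$ converges in the $C^\infty$ Cheeger--Gromov sense to a smooth Einstein manifold. As this limit is isometric as a metric space to the smooth manifold $N_\infty$, its singular set is empty and the limit is precisely the hyperbolic manifold $N_\infty$, with $\op{Ric}=\lambda_\infty g_\infty=-(n-1)g_\infty$; thus $\lambda_\infty=-(n-1)$ and $g_\infty$ is hyperbolic. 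In particular, for large $i$ there are diffeomorphisms $\phi_i\colon N_\infty\to M_i$ with $\phi_i^*g_i\to g_\infty$ in $C^\infty$.

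Finally I would conclude with local rigidity. The hyperbolic metric $g_\infty$ on the compact manifold $N_\infty$ with $n\ge 3$ is locally rigid as an Einstein metric \cite[\S 12.73]{Besse1987}, \cite{Koiso1978}: any Einstein metric sufficiently $C^{2,\alpha}$-close to $g_\infty$, up to scaling and pullback by a diffeomorphism, is isometric to a hyperbolic metric. Since $\phi_i^*g_i$ is Einstein and $C^\infty$-close to $g_\infty$ for large $i$, each such $(M_i,g_i)$ is hyperbolic, contradicting the choice of the sequence; this yields the desired $\epsilon(n,D)$, and hyperbolicity gives $h(M,g)=n-1$ by the Ledrappier--Wang rigidity \cite{Ledrappier-Wang2010}. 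I expect the main obstacle to be the second and third steps, namely converting the purely metric closeness of Theorem~\ref{thm-almost-rigid} into $C^{2,\alpha}$-closeness of metrics: one must exclude collapsing, which is handled by the bounded geometry of bounded-diameter hyperbolic manifolds, and exclude the formation of Einstein singularities in the limit, which here is automatic because the limit is the smooth manifold $N_\infty$, so that $\phi_i^*g_i$ genuinely enters the neighborhood of $g_\infty$ where local rigidity operates.
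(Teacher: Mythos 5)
Your proposal is correct and follows essentially the same route as the paper: a contradiction/compactness argument that upgrades the Gromov--Hausdorff closeness supplied by Theorem~\ref{thm-almost-rigid} to $C^\infty$-convergence toward a compact hyperbolic manifold (the paper cites \cite[Theorem 7.3]{CC1997I} for this step), followed by the local rigidity of Einstein structures with negative sectional curvature \cite[12.73 Corollary]{Besse1987} to force $g_i$ to be hyperbolic for large $i$. Your extra steps (ruling out $\lambda_i\ge 0$, non-collapsing via the bounded geometry of bounded-diameter hyperbolic manifolds, Anderson-type regularity) just make explicit details that the paper leaves to the cited convergence theorem.
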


	It was proved in \cite{CRX2019} that for any Riemannian $n$-manifold $(M,g)$ satisfies $\op{Ric}_g\ge -(n-1)$ and $\op{diam}(M,g)\leq D$, the following conditions are equivalent as $\epsilon\to 0$:
	\begin{enumerate}
		\item $h(M,g)\ge n-1-\epsilon$,
		\item $\frac{\op{vol}B_1(\tilde x)}{\op{vol}B_1^{-1}}\ge 1-\epsilon$ for any $\tilde x$ in the Riemannian universal cover $(\tilde M,\tilde g)$, where $B_1^{-1}$ is a unit ball in the simply connected hyperbolic space $\mathbb H^n$.
	\end{enumerate} 
	
	Hence manifolds with non-vanishing and almost maximal volume entropy admits a global bounded covering geometry. It was conjectured that if the universal cover in (2) is replaced by the local universal cover $\widetilde{B_1(x)}$, then Theorem \ref{thm-almost-rigid} still holds.  
	
	More generally, the following conjecture was raised by Chen-Rong-Xu \cite{CRX2019}.
	\begin{conjecture}[Quantitative maximal local rewinding volume
		rigidity \cite{CRX2019}]\label{conj-maxlocalrewindingvol}
		Given integer $n>0$ and $H=\pm 1$ or $0$, there exists a constant
		$\epsilon(n,\rho)>0$ such that for any $0<\epsilon<\epsilon(n,\rho)$, if a compact Riemannian $n$-manifold $(M,g)$ satisfies
		$$\op{Ric}_g\ge (n-1)H, \quad \frac{\op{vol}B_\rho (x^*)}{\op{vol} B_\rho^H}\ge 1-\epsilon, \text{ for any $x\in M$}, $$
		where $x^*$ is a preimage point of $x$ in the universal cover $\widetilde{B_\rho(x)}$, and $B_\rho^H$ is a $\rho$-ball in the simply connected space form of constant curvature $H$,
		then $M$ is diffeomorphic and $\varkappa(\epsilon|n,\rho)$-close to a space form of constant
		curvature $H$, provided that $\op{diam}(M)\le D$ (and, thus, $\epsilon(n, \rho, D)$) when $H\neq 1$.	
	\end{conjecture}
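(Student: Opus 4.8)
The plan is to argue by contradiction through a convergence scheme in which the local rewinding almost maximal volume plays the role of the non-collapsing input feeding Cheeger--Colding theory into the \emph{local universal covers}. Suppose the statement fails; then there are $\epsilon_i\to 0$ and compact $n$-manifolds $(M_i,g_i)$ with $\op{Ric}_{g_i}\ge (n-1)H$, $\op{diam}(M_i)\le D$, and $\op{Vol}B_\rho(x_i^*)\ge (1-\epsilon_i)\op{Vol}B_\rho^H$ for every $x_i\in M_i$ (with $x_i^*$ a lift to $\widetilde{B_\rho(x_i)}$), yet no $M_i$ is $\varkappa$-close and diffeomorphic to a space form of curvature $H$. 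By Gromov precompactness we may pass to a limit $(M_i,g_i)\xrightarrow{GH}(X,d)$; note that $M_i$ itself may collapse, so $X$ could be of lower dimension.

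First I would record the local almost rigidity. Since $\op{Ric}\ge (n-1)H$ and the rewinding volume is almost maximal, Bishop--Gromov forces every local universal cover to be non-collapsed, and the Cheeger--Colding almost volume-cone/volume-rigidity theorem gives that $(\widetilde{B_\rho(x_i)},x_i^*)$ is $\varkappa(\epsilon_i|n,\rho)$-Gromov--Hausdorff close to the model ball $(B_\rho^H,o)$, uniformly in $x_i$. Thus, although $M_i$ may collapse, the covering geometry at scale $\rho$ is two-sidedly controlled in the GH sense by the space form. The deck group $\Gamma_{x_i}=\pi_1(B_\rho(x_i))$ acts isometrically on $\widetilde{B_\rho(x_i)}$ with quotient $B_\rho(x_i)$, and the next task is to understand these actions in the limit.

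The heart of the argument is to promote this \emph{local} picture to a \emph{global} constant-curvature structure. Passing to equivariant Gromov--Hausdorff limits (in the spirit of Fukaya--Yamaguchi) of the pairs $(\widetilde{B_\rho(x_i)},\Gamma_{x_i})$, the limit covers are isometric to space-form balls $B_\rho^H$, and the limit groups act by isometries of the model. A Margulis-type lemma in the non-collapsed local covers should force each $\Gamma_{x_i}$, up to bounded index, to be virtually a discrete group of motions of the space form --- crystallographic when $H=0$, finite in $O(n+1)$ when $H=1$, and discrete in $\op{Isom}(\mathbb H^n)$ when $H=-1$. One then covers $M_i$ by such charts and uses the transition data to assemble a developing map into the model space form, endowing $M_i$ (or a controlled finite cover) with a constant-curvature structure up to the GH error; the resulting space form is the candidate to which $M_i$ is diffeomorphic and close, and the GH-closeness is finally upgraded to a diffeomorphism by a stability argument.

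The step I expect to be the main obstacle --- and the reason this remains a conjecture while only the Einstein case is settled here --- is the passage from local Gromov--Hausdorff closeness to the regularity needed to glue the charts and to exclude degenerate limits. Under only a lower Ricci bound the local covers are controlled merely in the metric sense, so $X$ could a priori carry an orbifold-singular or genuinely lower-dimensional stratum arising from the limit group actions, and there is no elliptic equation available to bootstrap the convergence to the $C^{1,\alpha}$ or smooth regularity that makes the developing map and the stability theorem operate. Establishing the missing Ricci analogue of the Cheeger--Fukaya--Gromov nilpotent structure, together with a Cheeger--Colding--Naber stability statement adapted to the rewinding charts, is exactly the crux. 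In the Einstein case treated in this paper this gap is closed because $\op{Ric}_g=\lambda g$ is elliptic in harmonic coordinates, forcing smooth convergence of the local covers to genuine space forms; removing ellipticity is what keeps the general statement open.
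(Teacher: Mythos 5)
The statement you are trying to prove is labeled a \emph{conjecture} in the paper, and the paper offers no proof of it: it is attributed to \cite{CRX2019} as open in general, with only partial cases settled (Einstein manifolds, two-sided Ricci bounds, and the case of a non-collapsed global universal cover). Your proposal, by your own account, is not a proof either: you explicitly flag the decisive step --- promoting the local Gromov--Hausdorff control of the rewinding covers $\widetilde{B_\rho(x)}$ to a global constant-curvature structure --- as the ``main obstacle,'' and nothing in your sketch supplies it. Concretely, the Margulis-type classification of the local deck groups $\Gamma_x=\pi_1(B_\rho(x))$ as crystallographic/finite/discrete motion groups, the assembly of a developing map from the transition data of the charts, and the final stability upgrade to a diffeomorphism are all asserted, not argued; under only $\op{Ric}_g\ge (n-1)H$ there is no elliptic equation to bootstrap the $\varkappa(\epsilon|n,\rho)$-GH closeness of the local covers to any regularity at which a developing map makes sense, the global limit $X$ may be collapsed and singular, and the local covers are \emph{incomplete}, so even the equivariant-limit step requires Fukaya-style pseudo-group machinery you have not set up. In short, the proposal reproduces the known starting point (Bishop--Gromov non-collapsing of the covers plus Cheeger--Colding almost volume rigidity giving $\widetilde{B_\rho(x_i)}\approx B_\rho^H$) and then an honest list of what is missing; that list is precisely why the statement remains a conjecture, so there is a genuine --- indeed total --- gap at the crux.

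One clarification on your closing diagnosis: it is accurate that ellipticity is what closes the gap in the Einstein case, but the paper's proof of Theorem \ref{thm-rigid-localrewindingvol} is not simply ``smooth convergence of the local covers to space forms.'' After using Anderson's $C^{1,\alpha}$-convergence and the Einstein bootstrap to get two-sided pinching $|\op{Sec}_g-H|\le\varkappa(\epsilon|n,\rho)$, the paper splits into three regimes: for $|H|$ small it invokes Theorem \ref{thm-rigidity-almostflat} (the almost-flat Einstein rigidity, which is new relative to \cite[Theorem E]{CRX2019} because here the curvature is not normalized, so almost flat manifolds are not excluded); for $H=-1$ it uses the Heintze--Margulis volume bound, Cheeger--Gromov compactness, and local rigidity of negatively curved Einstein structures \cite[12.73]{Besse1987}; and for $H=1$ it uses Klingenberg's injectivity radius estimate and the pinched local rigidity \cite[12.72]{Besse1987}. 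So even granting ellipticity, the passage from pinched curvature to rigidity goes through local deformation rigidity of Einstein metrics and a case analysis in $H$, not through the developing-map scheme you outline.
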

	
	Conjecture \ref{conj-maxlocalrewindingvol} has been verified for the case of Einstein manifolds in \cite[Theorem E]{CRX2019} and the case of manifolds with two-sided bounded Ricci curvature \cite{CRX2017}. If in addition, the global Riemannian universal cover $(\tilde M,\tilde x)$ satisfies the non-collapsing condition, $ \op{vol}(B_1(\tilde x))\ge v>0$, then it was proved in \cite{CRX2019} that Conjecture \ref{conj-maxlocalrewindingvol} holds for $\varkappa(\epsilon|v,n,\rho,D)$.
	
	The last main result is the rigidity for Einstein manifolds in Conjecture \ref{conj-maxlocalrewindingvol}, which improves \cite[Theorem E]{CRX2019}.
	
	\begin{theorem}[Rigidity of maximal local rewinding volume]\label{thm-rigid-localrewindingvol}
		Let $(M,g)$ be a Riemannian $n$-manifold satisfying
		$$\op{Ric}_g\ge (n-1)H-\epsilon,\quad \op{diam}(M,g)\le D, \quad \frac{\op{vol}B_\rho (x^*)}{\op{vol} B_\rho^H}\ge 1-\epsilon, \text{ for any $x\in M$,}$$
		where $-1\le H\le 1$.
		If $(M,g)$ is Einstein and $0< \epsilon\le \epsilon(n,\rho,D)$, then $(M,g)$ is isometric to a space form of constant curvature $H$. 
	\end{theorem}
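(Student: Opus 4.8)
The plan is to argue by contradiction, reducing the theorem to a uniform curvature pinching statement that is then killed by a rigidity input depending on the sign of $H$. Suppose the conclusion fails. Then there are Einstein $n$-manifolds $(M_i,g_i)$ with $\op{Ric}_{g_i}=\lambda_i g_i$, $\op{diam}(M_i,g_i)\le D$, and $\op{Vol}B_\rho(x_i^*)/\op{Vol}B_\rho^H\ge 1-\epsilon_i$ for every $x_i\in M_i$, where $\lambda_i\ge (n-1)H-\epsilon_i$ and $\epsilon_i\to 0$, yet none of the $(M_i,g_i)$ has constant sectional curvature. Each $M_i$ is compact (complete of bounded diameter). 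Since ``isometric to a space form of curvature $H$'' amounts to $\op{Sec}_{g_i}\equiv H$, which is a local invariant, the strategy is first to promote the almost maximal local rewinding volume into the uniform pinching $R_{g_i}\to R_H$ (whence $\lambda_i\to (n-1)H$ and the curvature operator $\mathcal R_{g_i}\to H\cdot\op{Id}$), and then to remove the residual gap case by case.

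The analytic heart, and the step I expect to be the \textbf{main obstacle}, is converting the rewinding hypothesis into genuine $C^\infty$ control. First, Einstein plus Bishop--Gromov on the local universal cover pins down $\lambda_i$: the lower Ricci bound gives $\lambda_i\ge (n-1)H-\epsilon_i$, while $\op{Vol}B_\rho(x_i^*)\ge (1-\epsilon_i)\op{Vol}B_\rho^H$ together with the monotonicity of the comparison ball volume in the curvature forces $\lambda_i\le (n-1)H+o(1)$, so $\lambda_i\to (n-1)H$. Next, the almost maximal volume of the rewound balls $\widetilde{B_\rho(x_i)}$ makes them Gromov--Hausdorff close to the model $B_\rho^H$ by Cheeger--Colding almost volume rigidity, and simultaneously furnishes the non-collapsing needed for Anderson's $\epsilon$-regularity. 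Because the covering map is a local isometry, $\widetilde{B_\rho(x_i)}$ is Einstein with the same constant, so in harmonic coordinates the Einstein equation is uniformly elliptic and the $C^{1,\alpha}$ convergence bootstraps to $C^\infty$. Thus the local covers converge smoothly to $(B_\rho^H,g_H)$ and the full curvature tensor converges; since curvature descends through the local isometry, $|R_{g_i}-R_H|\to 0$ uniformly on $M_i$. The genuinely delicate points here are the uniformity of the regularity scale over all basepoints $x_i$, the control of the harmonic radius near the boundary of the rewound ball, and tracking the deck-group action so that the estimates pass back down to $M_i$.

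It remains to upgrade almost constant curvature to constant curvature, which splits with the sign of $H$ and is where collapse must be confronted. If $H>0$, then $\mathcal R_{g_i}\to H\cdot\op{Id}>0$ forces the curvature operator $\mathcal R_{g_i}$ to be positive for large $i$, and Tachibana's theorem (a compact Einstein manifold with positive curvature operator is a spherical space form) gives constant curvature, a contradiction; this intrinsic argument is essential because spherical space forms do collapse (collapsing lens spaces satisfy the hypotheses). If $H=0$, then $\max|\op{Sec}_{g_i}|=\delta_i\to 0$ while $\op{diam}(M_i,g_i)\le D$, so $\op{diam}(M_i,g_i)^2\cdot\max|\op{Sec}_{g_i}|\le D^2\delta_i<\epsilon(n)$ for large $i$, and Corollary \ref{cor-almost-flat-Einstein-mfd} shows $(M_i,g_i)$ is flat, again a contradiction; here too collapse (flat tori with shrinking directions) is handled intrinsically by the paper's own almost-flat rigidity. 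If $H<0$, I instead exploit that closed hyperbolic $n$-manifolds have volume bounded below by a Kazhdan--Margulis constant $v_0(n)>0$: combined with the diffeomorphism and Gromov--Hausdorff closeness to a hyperbolic $N_i$ supplied by the Einstein case of Conjecture \ref{conj-maxlocalrewindingvol} (\cite[Theorem E]{CRX2019}), this excludes collapse, so after passing to a subsequence the uniformly bounded-curvature, non-collapsed Einstein manifolds $(M_i,g_i)$ converge in $C^\infty$ to a fixed hyperbolic $N$. Then $g_i$ is $C^{2,\alpha}$-close to the hyperbolic metric $g_N$, and the local rigidity of hyperbolic Einstein metrics (\cite[\S 12.73]{Besse1987}, \cite{Koiso1978}) forces $g_i$ to be isometric to $g_N$, the final contradiction.
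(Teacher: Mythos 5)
Your opening step (Bishop--Gromov pins down the Einstein constant, Cheeger--Colding almost-volume rigidity plus Anderson's $C^{1,\alpha}$ convergence plus the Einstein bootstrap on the rewound balls gives $|\op{Sec}_g-H|\le\varkappa(\epsilon|n,\rho)$) is exactly the paper's first move, and your sign-specific rigidity inputs are legitimate variants of the paper's: Tachibana in place of Klingenberg's injectivity radius estimate plus the local rigidity of the round metric (\cite[12.72]{Besse1987}), and \cite[Theorem E]{CRX2019} plus Kazhdan--Margulis in place of Heintze's volume bound for pinched negative curvature, followed by \cite[12.73]{Besse1987}. The genuine gap is in your case decomposition. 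You fix $H$ and split on its \emph{sign}, but the theorem asserts a threshold $\epsilon(n,\rho,D)$ that is \emph{independent of} $H\in[-1,1]$, and each of your nonzero cases only yields a threshold that degenerates as $|H|\to0$: for $H>0$ you need the pinching $\varkappa(\epsilon|n,\rho)$ to beat $H$ before the curvature operator is positive and Tachibana applies; for $H<0$ you need it to beat $|H|$, and moreover \cite[Theorem E]{CRX2019} is stated for the normalized case $H=-1$, so invoking it forces the rescaling $g\mapsto|H|g$, which shrinks the rewinding scale to $\sqrt{|H|}\rho$ and makes that theorem's own threshold $H$-dependent. Consequently the regime $0<|H|<H_0(n,\rho,D)$ --- where the manifold may simultaneously be collapsed and almost flat --- is covered by none of your three cases; what you prove is the weaker statement with $\epsilon(n,\rho,D,H)$, which for each fixed $H\neq0$ essentially already follows from \cite[Theorem E]{CRX2019} by rescaling. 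This near-flat regime is precisely what the paper's introduction identifies as the new content beyond \cite[Theorem E]{CRX2019}.

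The missing step is the paper's Case 1: for $|H|$ small but not necessarily zero, the hypotheses at curvature parameter $H$ \emph{imply} the hypotheses at parameter $0$ with slightly worse constants, since
$$\frac{\op{Vol}B_\rho(x^*)}{\op{Vol}B_\rho^0}\ \ge\ (1-\epsilon)\,\frac{\op{Vol}B_\rho^H}{\op{Vol}B_\rho^0}, \qquad \frac{\op{Vol}B_\rho^H}{\op{Vol}B_\rho^0}\longrightarrow 1 \ \text{ as } H\to 0,$$
so your $H=0$ argument (Corollary \ref{cor-almost-flat-Einstein-mfd}) applies verbatim and gives flatness for \emph{all} $|H|<H(n,\rho,D)$, with a threshold depending only on $(n,\rho,D)$. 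Only after this reduction may one assume $|H|\ge H(n,\rho,D)$, rescale to $H=\pm1$ at uniformly bounded cost in $\rho$ and $D$, and then run sign-specific arguments like yours with uniform constants. (One shared caveat, not a defect of your proof relative to the paper's: in the near-flat case the conclusion is a \emph{flat} space form rather than one of curvature exactly $H$; the paper's own Case 1 concludes exactly this, so the theorem's conclusion has to be read accordingly.)
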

	
	Note that, since the Ricci curvature lower bound $H$ in Conjecture \ref{conj-maxlocalrewindingvol} is normalized to be $\pm 1$ and $0$, almost flat manifolds are already excluded by this assumption. In Theorem \ref{thm-rigid-localrewindingvol}, however, there is no such curvature normalization, so that almost flat manifolds are included in the conditions of Theorem \ref{thm-rigid-localrewindingvol}. Hence its proof is quite different than the earlier proof of \cite[Theorem E]{CRX2019} at this point, where Theorem \ref{thm-rigidity-almostflat} plays an essential rule. 
	
	Recall that it was known by Perelman \cite{Perelman1994}, Colding \cite{Colding1996}, \cite{Colding1997} and Cheeger-Colding \cite{CC1997I} that any Riemannian $n$-manifold with positive Ricci curvature $ \op{Ric}\ge n-1 $ and almost maximal volume is diffeomorphic to a round sphere $\mathbb S^n$. A special case of Theorem \ref{thm-rigid-localrewindingvol} is the following rigidity for spheres.
	\begin{corollary}[{\cite[Theorem C]{Honda-Mondello}}]\label{cor-sphere}
		There is $\epsilon(n)>0$ such that for any
		Riemannian $n$-manifold $(M,g)$ with $\op{Ric}_g\ge (n-1)$ and $\op{vol}(M,g)\ge (1-\epsilon(n))\op{vol}(\mathbb S^n)$, if $(M,g)$ is Einstein, then $M$ is isometric to the round sphere $\mathbb S^n$.
	\end{corollary}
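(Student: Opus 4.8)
The plan is to deduce Corollary~\ref{cor-sphere} from Theorem~\ref{thm-rigid-localrewindingvol} with $H=1$, by showing that the almost maximal \emph{global} volume hypothesis already forces the almost maximal \emph{local rewinding} volume condition. First I would fix the relevant scales. Since $\op{Ric}_g\ge n-1$, the Bonnet--Myers theorem gives $\op{diam}(M,g)\le \pi$, so I may take $D=\pi$, and I fix once and for all a radius, say $\rho=1<\pi$. With these choices the Ricci lower bound required in Theorem~\ref{thm-rigid-localrewindingvol}, namely $\op{Ric}_g\ge (n-1)H-\epsilon$, holds for $H=1$ and every $\epsilon>0$, since $\op{Ric}_g\ge n-1=(n-1)\cdot 1$.

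The key step is to convert the global volume bound into a local one. By the Bishop--Gromov inequality the ratio $r\mapsto \op{Vol}B_r(x)/\op{Vol}B_r^{1}$ is non-increasing, where $B_r^{1}$ denotes a ball of radius $r$ in $\mathbb S^n$. Because $\op{diam}(M,g)\le\pi$ we have $B_\pi(x)=M$, while $B_\pi^{1}$ exhausts $\mathbb S^n$ up to a null set, so $\op{Vol}B_\pi^{1}=\op{Vol}(\mathbb S^n)$. Evaluating the monotone ratio at $r=\pi$ and using the hypothesis gives, for every $x\in M$,
\[
\frac{\op{Vol}B_1(x)}{\op{Vol}B_1^{1}}\ \ge\ \frac{\op{Vol}B_\pi(x)}{\op{Vol}B_\pi^{1}}\ =\ \frac{\op{Vol}(M,g)}{\op{Vol}(\mathbb S^n)}\ \ge\ 1-\epsilon .
\]
Next, let $x^{*}$ be a preimage of $x$ in the local Riemannian universal cover $\widetilde{B_1(x)}$ and let $p\colon \widetilde{B_1(x)}\to B_1(x)$ be the covering projection, a local isometry. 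For any $y\in B_1(x)$ a minimizing geodesic from $x$ to $y$ stays in $B_1(x)$ and lifts to a geodesic from $x^{*}$ of the same length, whose endpoint lies in $B_1(x^{*})$; hence $p$ maps $B_1(x^{*})$ onto $B_1(x)$. Since $p$ preserves volume locally, $\op{Vol}B_1(x^{*})\ge \op{Vol}B_1(x)$, and therefore
\[
\frac{\op{Vol}B_1(x^{*})}{\op{Vol}B_1^{1}}\ \ge\ 1-\epsilon \qquad\text{for all } x\in M .
\]

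Thus, for $0<\epsilon\le\epsilon(n,1,\pi)=:\epsilon(n)$, all hypotheses of Theorem~\ref{thm-rigid-localrewindingvol} are satisfied with $H=1$, $\rho=1$, $D=\pi$, and as $(M,g)$ is Einstein we conclude that $(M,g)$ is isometric to a space form of constant curvature $1$, i.e.\ to a quotient $\mathbb S^n/\Gamma$. Finally I would rule out nontrivial $\Gamma$: if $\Gamma\neq\{1\}$, then $\op{Vol}(\mathbb S^n/\Gamma)\le \tfrac12\op{Vol}(\mathbb S^n)$, contradicting $\op{Vol}(M,g)\ge(1-\epsilon)\op{Vol}(\mathbb S^n)$ once $\epsilon<\tfrac12$. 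Hence $\Gamma$ is trivial and $M$ is isometric to the round sphere $\mathbb S^n$. The only genuine points requiring care are the covering inequality $\op{Vol}B_1(x^{*})\ge\op{Vol}B_1(x)$ and the observation that Bishop--Gromov monotonicity evaluated at the maximal radius $\pi$ transfers the global volume gap to every unit ball; all the substantive analytic content is already packaged in Theorem~\ref{thm-rigid-localrewindingvol}.
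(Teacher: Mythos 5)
Your proposal is correct and follows exactly the route the paper intends: the paper states Corollary~\ref{cor-sphere} as a special case of Theorem~\ref{thm-rigid-localrewindingvol}, and your argument simply fills in the routine details (Bishop--Gromov to localize the volume hypothesis, lifting to the local cover $\widetilde{B_1(x)}$, and the volume count ruling out a nontrivial quotient $\mathbb S^n/\Gamma$). All of these steps check out, so there is nothing to correct.
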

	
	In fact, it was proved by Honda-Mondello \cite[Theorem C]{Honda-Mondello} that the Corollary \ref{cor-sphere} holds for singular Einstein stratified space without the codimension $ 2 $-stratum.
	So are all main Theorems \ref{thm-rigidity-almostflat}, \ref{thm-gap}, \ref{thm-rigid-localrewindingvol}; see Theorem \ref{Thm-stratified-rigidity} in Section \ref{startified-space}.
	
	After the paper is finished, we learned that another proof of Corollary \ref{cor-sphere} was given by Dong-Chen \cite{Dong-Chen} recently.
	
	\textbf{Acknowledgements:} This work is partially supported by NSFC (Grant No. 11821101 and No. 12271372). S. X. is grateful to Xiaoyang Chen for raising the question how an Einstein almost flat manifold can be, and Shouhei Honda for pointing out that Theorem \ref{thm-rigid-localrewindingvol} holds for singular Einstein stratified space without codimension $ 2 $ stratum, and Nan Li for helpful discussion on Example \ref{ex-iterated-metric}.
	
	\section{Preliminaries}
	\subsection{Equivariant Gromov-Hausdorff convergence}
	
	The references of this part are \cite{FY1992}, \cite{Rong2010}.
	
	Let $ (X,d_X) $ and $ (Y,d_Y) $ be two compact length metric space. A map $ f: X\to Y $ is called an $ \epsilon $-Gromov-Hausdorff approximation from $ X $ to $ Y $, if $ B_{\epsilon}(f(X))=Y $ and $ |d(x_1,x_2)-d(f(x_1),f(x_2))|<\epsilon $.
	Roughly speaking, the Gromov-Hausdorff distance $ d_{GH}(X,Y) $ can be viewed as the infimum of $ \epsilon $ such that there exists an $ \epsilon $-Gromov-Hausdorff approximation from $ X $ to $ Y $ and an $ \epsilon $-Gromov-Hausdorff approximation from $ Y $ to $ X $.
	We say that $ (X_i,d_i) $ converges to $ (X,d) $ in Gromov-Hausdorff sense, denoted by $ (X_i,d_i)\overset{GH}{\longrightarrow} (X,d) $, if the Gromov-Hausdorff distance satisfies $ d_{GH}((X_i,d_i),(X,d)) \to 0, i\to \infty $.
	
	Assume $ \Gamma $ and $ G $ are closed isometric groups  on X and Y respectively. A triple of maps $ f: X\to Y,\varphi: \Gamma\to G,\psi: G\to \Gamma $ is called $ \epsilon $-equivariant Gromov-Hausdorff approximation from $ (X,\Gamma) $ to $ (Y,G) $, if for $ x\in X $, $ y\in Y $, $ t\in \Gamma $, $ s\in G $ such that 
	$$ f \text{ is a }\delta\text{-GHA},
	\quad d( \varphi(t)f(x), f(tx) )<\epsilon,
	\quad d( f(\psi(s)x), sf(x) )<\epsilon .$$
	The equivariant Gromov-Hausdorff distance $ d_{GH}((X,\Gamma),(Y,G)) $ can be viewed as the infimum of $ \epsilon $ such that there exists an $ \epsilon $-equivariant Gromov-Hausdorff approximation from $ X $ to $ Y $ and an $ \epsilon $-equivariant Gromov-Hausdorff approximation from $ Y $ to $ X $. We say that $ (X_i,G_i) $ converges to $ (X,G) $ in equivariant Gromov-Hausdorff sense, denoted by $ (X_i,G_i)\overset{GH}{\longrightarrow} (X,G) $, if $ d_{GH}((X_i,G_i),(X,G))\to 0, i\to \infty $.
	
	The followings are some properties of equivariant Gromov-Hausdorff convergence applied later.	
	\begin{lemma}\label{group-GH_converge}
		Let $ (X_i,d_i) $ and $ (X,d) $ be compact length metric space.
		If $ (X_i,d_i)\overset{GH}{\longrightarrow} (X,d) $, and $ G_i $ is the closed group of isometries on $ X_i $, then $ (X_i,G_i)\overset{GH}{\longrightarrow} (X,G) $ after passing to a subsequence, where $ G $ is a closed group of isometries on $ X $.
	\end{lemma}
	
	\begin{lemma}\label{quotient-GH-converge}
		Let $ (X_i,d_i) $ and $ (X,d) $ be compact length metric space, and $ G_i $ is the closed group of isometries on $ X_i $.
		If $ (X_i,G_i)\overset{GH}{\longrightarrow} (X,G) $, then $ X_i/G_i\overset{GH}{\longrightarrow} X/G $.
	\end{lemma}
	
	The above notion and properties of equivariant Gromov-Hausdorff convergence can be extended to a pointed version for complete metric spaces without a uniform diameter bound. We say $ (X_i,x_i,d_i) $ pointed Gromov-Hausdorff converges to $ (X,x,d) $, if for all $ R>0 $, the closed ball $ B_R(x_i) $ and $ B_R(x) $ with restricted metric from $ (X_i,d_i) $ and $ (X,d) $ respectively satisfies $ (B_R(x_i),d_i)\overset{GH}{\longrightarrow} (B_R(x),d) $, and $ x_i\to x $ as $ i\to \infty $. Lemma \ref{group-GH_converge} and Lemma \ref{quotient-GH-converge} also hold for the pointed Gromov-Hausdorff convergence.
	
	Let $ (X_i,x_i,d_i)\overset{GH}{\longrightarrow} (X,x,d) $, $ \Gamma_i $ be the fundamental group of $ (X_i,x_i) $, and $ (\tilde{X}_i,\tilde{x}_i) $ be the universal covering space of $ (X_i,x_i)$. If $ (\tilde{X}_i,\tilde{x}_i) $ Gromov-Hausdorff converges to a length metric space $ (\tilde{X},\tilde{x}) $, i.e., $ (\tilde{X}_i,\tilde{x}_i) \overset{GH}{\longrightarrow} (\tilde{X},\tilde{x}) $, then by Lemma\ref{group-GH_converge} and Lemma \ref{quotient-GH-converge}, we have the following commutative diagram
	$$\begin{CD}
		(\tilde{X}_i,\tilde{x}_i ,\Gamma_i) @>GH>> (\tilde X,\tilde{x}, G)\\
		@V\pi_iVV @V\pi VV\\
		(X_i,x_i) @>GH>> (X,x),
	\end{CD}
	$$
	where $ \pi_i: (\tilde{X}_i,\tilde{x}_i, \Gamma_i) \to (X_i,x_i) $ is the universal cover, $ \Gamma_i $ is the deck-transformation group, and $ \pi $ is the limit map of $ \pi_i $.
	
	There is a precompactness of complete Riemainnian $ n $-manifolds with Ricci curvature bounded below.
	\begin{theorem}[Gromov's precompactness \cite{GLP1981}]
		Let $ (M_i,p_i,g_i) $ be a sequence of complete Riemainnian $ n $-manifolds satisfying 
		$$ \op{Ric}_{g_i}\geq (n-1)H,$$
		where $ H\in \mathbb{R} $ is a constant. Then $ (M_i,p_i,g_i) $ Gromov-Hausdorff converges to a length metric space after passing to a subsequence.
	\end{theorem}

	\subsection{$ C^{\infty} $-convergence of Einstein metrics}
	In this subsection, we first give the definition and properties of $ C^{k,\alpha} $-convergence. Next, we recall  the $ C^{\infty} $-convergence of Einstein metrics.
	
	\begin{definition}
		Let $ M $ be a compact smooth $ n $-manifold. Let $ g_i $ and $ g $ be complete $ C^{k,\alpha} $-smooth Riemannian metrics on $ M $, where $ \alpha\in (0,1] $. We say that $ g_i $ converges to $ g $ in the sense of $ C^{k,\alpha} $-norm if for any $ p\in M $, there exist a coordinate chart around $ p $, $ (x_1,\cdots,x_n): U \to \Omega \subset \mathbb{R}^n$, such that $ g_{i,st}=g_i(\frac{\partial}{\partial x_s},\frac{\partial}{\partial x_t}) $ $ C^{k,\alpha} $-converges to $ g_{st}=g(\frac{\partial}{\partial x_s},\frac{\partial}{\partial x_t}) $ as $ i \to \infty $, i.e., 
		$$ \| g_{i,st}-g_{st} \|_{C^{k,\alpha}(\Omega)} \to 0, \text{ as } i\to \infty ,$$
		where the $ C^{k,\alpha} $-norm $ \|f\|_{C^{k,\alpha}(\Omega)} $ of a smooth function is defined by 
		$$  \|f\|_{C^{k,\alpha}(\Omega)}=\sup_{x\in\Omega} |f(x)| +\sum_{|I|=1}^{k}\sup_{x\in\Omega} | \partial^I f(x) |_{C^{|I|}}+\sum_{|I|=k}\sup_{x,y\in \Omega}\frac{|\partial^I f(x)-\partial^I f(y)|}{|x-y|^{\alpha}} ,$$
		where $ \partial^I f(x) =\frac{\partial^{|I|}f}{ \partial (x_1)^{i_1}\cdots\partial (x_n)^{i_n}} $, and $ I=(i_1,\cdots,i_n) $, $ |I|=i_1+\cdots+i_n $.
	\end{definition}
	We say that $ g_i $ converges to $ g $ in the sense of $ C^\infty $-norm if $ g_i $ converges to $ g $ in the sense of $ C^k $-norm, for all $ k>0 $.
	
	\begin{definition}
		Let $ (M_i,g_i) $ and $ (M,g) $ be compact smooth $ n $-manifolds with $ C^{k,\alpha} $-smooth Riemannian metrics. We say that $ (M_i,g_i) $ converges to $ (M,g) $ in the $ C^{k,\alpha} $-topology if there exists an integer $ i_0>0 $ such that the following holds: for each $ i\ge i_0 $ there exists $ C^{k+1,\alpha} $-diffeomorphism $ \Phi_i: M\to M_i $ such that the pullback metric $ \Phi_i^*g_i $ converges to $ g $ in the sense of $ C^{k,\alpha} $-norm.
		
		We say that $ (M_i,g_i) $ converges to $ (M,g) $ in the $ C^{\infty} $-topology if $ (M_i,g_i) $ converges to $ (M,g) $ in the $ C^k $-topology, for all $ k>0 $.
	\end{definition}
	
	The Anderson's $ C^{1,\alpha} $-convergence theorem provides a criterion that a Gromov-Hausdorff Cauchy sequence of manifolds converges in $ C^{1,\alpha} $-topology.
	
	\begin{theorem}[$ C^{1,\alpha} $-convergence theorem,\cite{Anderson1990}]\label{Thm-convergence}
		Assume $ (M_i,g_i)\overset{GH}{\longrightarrow}(X,d) $, where$ (M_i,g_i) $ is a compact smooth $ n $-manifold satisfying 
		$$ |\op{Ric}|_{g_i}\le n-1,\quad \op{diam}(M_i,g_i)\le D,\quad \op{Vol}(B_1(p_i))\ge v>0 ,$$
		and any point $ x\in X $ is regular, i.e., any tangent cone of $ x $ is $ \mathbb{R}^n $. Then $ (X,d) $ is isometric to a $ C^{1,\alpha} $-Riemannian manifold and $ (M_i,g_i) $ converges to $ (X,d) $ in the $ C^{1,\alpha} $-topology, where $ \alpha\in (0,1) $.
		
		If, in addition, $ (M_i,g_i) $ is Einstein, i.e., $ \op{Ric}_{g_i}=\lambda_i g_i $, then $ (X,d) $ is isometric to a Einstein manifold, and $ (M_i,g_i) $ converges to  $ (X,d) $ in the $ C^{\infty} $-topology.
	\end{theorem}
	
	In fact, for any $ Q>0 $ and $ p_i\in M_i $ converges to $ x\in X $ in Theorem \ref{Thm-convergence}, there exists $r_h(x,Q,\alpha)>0 $ such that there is a harmonic coordinate chart $ (x_1,\cdots,x_n):(B_{r_h}(p_i),g_i)\to \Omega\subset \mathbb{R}^n $, and the metric tensor of $ g_i $ satisfying $ e^{-Q}\delta_{st} \le g_{i,st} \le e^{Q}\delta_{st} $ and $ r_h^{1+\alpha}\| \partial g_{i,st} \|_{C^{0,\alpha}(\Omega)}\le e^Q $, where $ \delta_{st} $ is Kronecker symbols. The largest $ r_h $ on $ (M,g) $ is called the $ C^{1,\alpha} $-harmonic radius of $ (M,g) $.

	if $ (M_i,g_i) $ is Einstein, then the metric tensor $ g $ in the harmonic coordinate chart above satisfies $ \Delta =\sum_{k,l=1}^{n}g_i^{kl}\frac{\partial^2 }{\partial x_k \partial x_l} $ and
	$$ \frac{1}{2}\Delta g_{i,st} = -\lambda g_{i,st}+ Q(g_i,\partial g_i), $$
	where each item of $ Q $ is a fraction, the denominator depends only on $ \sqrt{\op{det}g_{i,st}} $ and the numerator is quadratic in $ \partial g_i $ times some $ g_{i,kl} $ in the matrix $ g_i $, and $ \Delta $ is the Laplacian of $ g_i $. By the standard Schaulder estimate for elliptic PDE,
	$ (M_i,g_i) $ converges to $ (X,d) $ in the $ C^\infty $-topology. Hence, $ (X,d) $ is also a Einstein manifold.
	
	\subsection{ Rigidity of Einstein metrics under sectional curvature bounds}
	The references of this part is \cite[Chapter 12]{Besse1987}.
	
	An Einstein structure is an isometric equivalence class of Riemannian metrics on a given compact smooth manifold $ M $.
	
	\begin{definition}
		The moduli space of Einstein structures on $ M $, denoted $ \mathcal{E}(M) $, is the quotient 
		$$ \{\text{Einstein metrics with total volume } 1\}/\op{Diff}(M) ,$$
		where $ \op{Diff}(M)=\{ \varphi|\varphi \text{ is a diffeomorphism of }M \} $, $ \mathcal{E}(M) $ endowed with the quotient topology.
	\end{definition}
	
	\begin{definition}
		An Einstein structure of $ M $ is called rigid if it is an isolated point of the Moduli Space.
	\end{definition}
	
	The following rigidity of Einstein metrics under bounded sectional curvature will be applied later in the proofs of Theorems \ref{thm-gap}, \ref{thm-rigid-localrewindingvol}.
	
	\begin{theorem}[{\cite[12.72 Corollary]{Besse1987}}]\label{Thm-pinch-curvature-rigid}
		Let $ M $ be a smooth $ n $-manifold.
		Any Einstein structure of $ M $ with $ \delta $-pinched sectional curvature, $ \frac{n-2}{3n}< \delta\le 1 $, is rigid.
	\end{theorem}
	
	\begin{theorem}[{\cite[12.73 Corollary]{Besse1987}}]\label{Thm-neg-curvature-rigid}
		Let $ M $ be a smooth $ n $-manifold, $ n\ge 3 $.
		Any Einstein structure of $ M $ with negative sectional curvature is rigid.
	\end{theorem}
	
	\section{Proof of Main Theorem}\label{preliminaries}
	\begin{proof}[Proof of Theorem \ref{thm-rigidity-almostflat}]
		~
		
		By Theorem \ref{thm-almostflat}, (1), (2) and (3) are equivalent. It suffices to show(2) and (3) implies (4).
		
		Step 1. $ (M,g) $ satisfying (2) and (3) must be Ricci-flat.
		
		Let us argue by contradiction. Suppose that there is a sequence of Einstein $n$-manifolds $(M_i,g_i)$ such that $\op{Ric}_{g_i}=\lambda_ig$ with $\lambda_i\ge -(n-1)$ and $\op{diam}(M_i,g_i)\to 0$, and satisfying (2) and (3) in Theorem \ref{thm-almostflat}, but none of $ \lambda_i \ne 0 $. By (2) and Bonnet-Myer's theorem, $\lambda_i< 0$. Then up to a rescaling on the metrics, we assume that $\lambda_i=-(n-1)$. Note that $ (M_i,g_i) $ still Gromov-Hausdorff converges to a point. By Theorem \ref{thm-almostflat}, (3) still holds. That is, the Riemannian universal cover $ (\tilde{M}_i,\tilde{x}_i) $ is a non-collapsing sequence.
		
		By passing to a sequence, let us consider the following equivariant Gromov-Hausdorff convergence,
		$$\begin{CD}
			(\tilde{M}_i,\tilde x_i, \Gamma_i) @>GH>> (\tilde X, \tilde x, G)\\
			@V\pi_iVV @V\pi VV\\
			(M_i,x_i) @>GH>> \op{pt}
		\end{CD}
		$$
		where $\Gamma_i$ is the deck-transformation of fundamental group $\pi_1(M_i,x_i)$ and $G$ its limit group. By the generalized Margulis lemma \cite{KW} and Cheeger-Colding \cite{CC1997I}, the identity component $ G_0 $ of $ G $ is a nilpotent Lie group.
		
		Since $\tilde X$ is a non-collapsed Ricci limit space, by \cite{CC1997I} regular points are dense in $ \tilde{X} $, where any tangent cone is isometric to $\mathbb R^n$. Because $G$ acts on $\tilde X$ transitively, all points in $\tilde X$ are regular. Hence, by Theorem \ref{Thm-convergence}, $(\tilde M_i, \tilde x_i)$ converges to $\tilde X$ in the $C^\infty$-norm on every $R$-ball (cf. \cite[Theorem 7.3]{CC1997I}), and $\tilde X$ is a smooth Einstein Riemannian manifold $(\tilde X, \tilde g)$ with Einstein constant $\lambda_\infty=\lim_{i\to\infty}\lambda_i= -1$.
		
		Claim: the identity component $G_0$ acts on $\tilde X$ transitively and freely.
		
		By the claim, $(\tilde X, \tilde g)$ is a nilpotent Lie group with a left invariant metric. 
		By Milnor \cite[Theorem 2.4]{Milnor1976}, any left invariant metric of a nilpotent but not 
		commutative group has both directions of strictly negative Ricci curvature and positve Ricci curvature. Hence $(\tilde X, \tilde g)$ must be a flat manifold. That is, $ \lambda_\infty=0 $, a contradication. (In fact, by Theorem \ref{thm-almostflat} or \cite[Proposition 5.8]{NaberZhang2016}, it can be seen $(\tilde X,\tilde g)$ is isometric to $\mathbb R^n$. But we do not need this here.)
		
		Step 2. Because $ (M,g) $ is Ricci-flat, then either (2) or (3) implies the Riemannian universal cover $ \tilde{M}=\mathbb{R}^n$.There are two methods to illustrate.
		
		Method 1. By Cheeger-Gromoll's splitting theorem \cite{Cheeger-Gromoll1971} each of the universal cover of $ M_i $ isometrically splits to $\mathbb R^{k}\times Y$, where $Y$ is compact and simply connected. Moreover, there is a finite cover $ \hat{M} $ of $ M $ which is diffeomorphic to $ T^k \times Y $ by \cite[Theorem 9.2]{Cheeger-Gromoll72}. Because the polycyclic rank of $ \pi_1(M) $ is $ n $, it follows that $ Y $ is a point and $ \tilde{M}=\mathbb{R}^n $.
		
		
		Method 2. Let us argue by contradiction. There are infinite many $(M_i,g_i)$ which are Ricci-flat but non-flat satisfying (3) and $ \op{diam}(M_i)\to 0 $. By Cheeger-Gromoll's splitting theorem each  universal cover of $ M_i $ isometrically splits to $\mathbb R^{k_i}\times Y_i$, where $Y_i$ is compact and simply connected, but not a point. 
		
		Since $ \tilde{M}_i $ is not collapsing, $ Y_i $ can not converges to a point, i.e., $ \op{diam}(Y_i)\ge d>0 $.
		Let $d_i=\op{diam}(Y_i) $, then the rescaled manifolds $d_i^{-1}(M_i,g_i)$ satisfies $ \op{diam}(Y_i)=1 $, and their universal covers admit a equivariantly Gromov-Hausdorff convergent subseqence, i.e.,
		$$\begin{CD}
			(d_i^{-1}\mathbb R^{k_i}\times Y_i,\tilde x_i, \Gamma_i) @>GH>> (\mathbb R^{k_\infty}\times Y_\infty, \tilde x, G)\\
			@V\pi_iVV @V\pi VV\\
			(d_i^{-1}M_i,x_i) @>GH>> \op{pt}
		\end{CD}
		$$
		Since the nilpotent group $G_0$ acts transitively, $Y_\infty$ must be a torus $T^m$.  Since $d_i^{-1} Y_i$ Gromov-Hausdorff converges to $Y_\infty$, there is an onto map from $\pi_1(Y_i)$ to $\pi_1(T^m)$ (cf. \cite{Tuschmann1995}), a contradiction.

		
		
		Proof of the claim: 
		
		Let us consider the component $G_0$ of the identity in $G$. Since $\tilde X/G_0$ is $0$-dimensional and connected, $G_0$ acts on $\tilde X$ transitively.
		Since $G_0$ is a nilpotent Lie group, for any $y\in \tilde X$, the isotropy group $G_{0,y}$ of $G_0$ at $y$, which is compact, lies in the center of $G_0$. Therefore $G_{0,y}$ is normal in $G_0$. And thus $G_{0,y}$ acts trivially on $\tilde X$. Combing the fact that the action of $G$ on $\tilde X$ is effective, we see that $G_{0,y}$ must be trivial. 
	\end{proof}
	
	Theorem \ref{thm-gap} is a corollary of Theorem \ref{thm-almost-rigid}.

	\begin{proof}[Proof of Theorem \ref{thm-gap}]
		\quad
		
		Let us argue by contradiction. Suppose that there is a sequence of Einstein $n$-manifolds $(M_i,g_i)$, $\op{Ric}_{g_i}=\lambda_i g_i$, $n\ge 3$ satisfying
		$$\lambda_i\ge -(n-1),\quad h(M_i,g_i)> n-1-\epsilon_i, \quad \operatorname{diam}(M_i,g_i)\le D,$$
		but none of $(M_i,g_i)$ is hyperbolic.
		
		First, by Theorem \ref{thm-almost-rigid} and Theorem \ref{Thm-convergence} (c.f. \cite[Theorem 7.3]{CC1997I}), by passing to a subsequence $(M_i,g_i)$ are diffeomorphic to a hyperbolic manifold $\mathbb H^n/\Gamma$, and $g_i$ converges to $g$ in the $C^{\infty}$-topology.
		
		Secondly, by Theorem \ref{Thm-neg-curvature-rigid} for $n\ge 3$, any Einstein structure with negative sectional curvature is rigid. Hence $g_i$ is also hyperbolic for all large $i$. Thus a contradiction is derived.
	\end{proof}
	
	\begin{proof}[Proof of Theorem \ref{thm-rigid-localrewindingvol}]
		\quad
		
		Assume that the Ricci curvature of $ (M,g) $ satisfies $ \op{Ric}_g=\lambda g $. Let $ x\in M $ be a fixed point. Since $ \frac{\op{vol}B_\rho (x^*)}{\op{vol} B_\rho^H}\ge 1-\epsilon$, by Bishop volume comparison $ \frac{\lambda}{n-1}$ is $\delta(\epsilon|n,\rho)$-close to $ H $, where $ \delta\to 0 $ when $ \epsilon \to 0 $. 
		Moreover, by Cheeger-Colding almost maximal volume theorem \cite{Cheeger-Colding96}, $ B_{\rho/2} (x^*) $ is Gromov-Hausdorff close to $B_{\rho/2}^H $ for $ \epsilon $ small. By Anderson convergence Theorem \ref{Thm-convergence}, there is a diffeomorphism from $ B_{\rho/2}^H $ to $ B_{\rho/2} (x^*) $ such that the pull back metric is $ C^{1,\alpha} $ close to that of $ B_{\rho/2}^H $. Because the pull back metric is Einstein, they are $ C^\infty $ close to each other. In particular, the sectional curvature of $ M $ satisfies $ H-\varkappa(\epsilon|n,\rho) \le\op{Sec}_g\le H+\varkappa(\epsilon|n,\rho) $ (c.f. \cite[Lemma 3.6]{CRX2019}). 
		
		Case 1. $ |H| $ sufficient small.
		
		If $ H=0 $, then the manifold $ (M,g) $ satisfies $ \op{diam}(M,g)^2\cdot\max|\op{Sec}_g|< D^2\varkappa(\epsilon|n,\rho)$. By Corollary \ref{cor-almost-flat-Einstein-mfd}, there is $ \epsilon(n,\rho,D)>0 $ depending only on $ n,\rho,D $  such that for any $ 0<\epsilon\le\epsilon(n,\rho,D) $, $ M $ is flat.
		
		Observe that for $ -\epsilon(n,\rho,D)/2<H(n-1)<\epsilon(n,\rho,D)/2  $ and $ 0\le\epsilon<\epsilon(n,\rho,D)/2 $, $ M $ satisfies 
		$$\op{Ric}_g\ge -\epsilon(n,\rho,D),\quad \op{diam}(M,g)\le D, \quad \frac{\op{vol}B_\rho (x^*)}{\op{vol} B_\rho^0}\ge (1-\epsilon)\frac{\op{vol}B_\rho^H}{\op{vol}B_\rho^0},$$
		where $\frac{\op{vol}B_\rho^H}{\op{vol}B_\rho^0} \to 1$, as $ H\to 0 $.
		Hence, there is $ H(n,\rho,D)>0 $ and $ \epsilon(n,\rho,D)>0 $ such that for any $ |H| < H(n,\rho,D), \epsilon<\epsilon(n,\rho,D) $, $ M $ is also flat.
		
		What remains is to show the case that $ |H|>H(n,\rho,D) $.
		Up to a rescaling of the metric, we assume $ H=\pm 1 $ in the following.
		
		Case 2. $  H = -1 $. 
		
		Since $ M $ has bounded negative sectional curvature $ |\op{Sec}_g+1|\le \varkappa(\epsilon|n,\rho) $, by Heintze-Margulis lemma \cite{Heintze} the volume of $ M $ has a lower bounded $ \op{vol}(M)\ge v(n)>0 $. By Cheeger-Gromov convergence theorem \cite{Cheeger67,Petersen} and the fact that $ M $ is Einstein, $ (M,g) $ is diffeomorphic to a hyperbolic manifold whose metrics are $\varkappa(\epsilon|n,\rho,D)$ $C^\infty$ close.
		By Theorem \ref{Thm-neg-curvature-rigid}, any Einstein structure with negative sectional curvature is rigid. It tends out that $ M $ is  isometric to a hyperbolic manifold for $ \epsilon $ sufficient small.
		
		Case 3. $ H=1 $. 
		
		Because the curvature of $ M $ is almost $ 1 $, the Klingenberg's $ 1/4 $-pinched injectivity radius estimate implies that the injective radius of $ \tilde{M} $ is $ \ge \pi/2 $. By Cheeger-Gromov convergence Theorem \ref{Thm-convergence} and the fact that $ M $ is Einstein, the simply connected manifold $ \tilde{M} $ is  diffeomorphic to $ \mathbb{S}^n $ whose metrics are $ C^{\infty} $ close. By Theorem \ref{Thm-pinch-curvature-rigid}, any Einstein structure with $ \delta $-pinched sectional curvature with $ \delta>(n-2)/3n $ is rigid. It tends out that $ \tilde{M} $ is  isometric to $ \mathbb{S}^n $ for $ \epsilon $ sufficient small.
	\end{proof}
	
	\section{Stratified Einstein spaces}\label{startified-space}
	
	The references of this section are \cite{Honda-Mondello}, \cite{BKMR2021}, \cite{ALMP12}.
	
	A $ n $-dimensional stratified space \cite{BKMR2021},\cite{ALMP12} is a metrizable, locally compact, second countable topological space $ X $ with a fixed stratification, which is a locally finite decomposition into a union of strata
	\begin{equation}\label{def-stratification}
		X=\bigsqcup_{l_j=l_1}^{l_k}\Sigma^{l_j}(X) , 0\le l_1<l_2<\cdots<l_{k-1}<n-1<l_k=n 
	\end{equation}
	satisfying the following three conditions.
	\begin{enumerate}
		\item All $\Sigma^{l_j}(X) $ are smooth, possibly open, $ l_j $-manifolds, where $ l_{k-1}\le n-2 $, i.e, $ \Sigma^{n-1}(X)=\emptyset $, and for any two components  $Y_\alpha$, $Y_\beta$ of all strata, $Y_\alpha \subset \overline{Y_\beta}$ if $Y_\alpha \cap \overline{Y_\beta} \neq \emptyset$.
		
		\item Each component of $\Sigma^{l}(X) $ corresponds with a compact and connected stratified space $ Z_l $ of dimension $ n-l-1 $ such that for any $ x $ in a component of $\Sigma^{l}(X) $, there is a homeomorphic $ \varphi_x $ from $ B_{r_x}(0_l)\times C_{[0,r_x)}(Z_l) $ to the neighborhood $ U_x $ of $ x $ preserving the stratification between $ B_{r_x}(0_l)\times C_{[0,r_x)}(Z_l)$ and $ U_x $, where $ B_{r_x}(0_l) $ is a Euclidean ball in $ \mathbb{R}^l $ and $ C_{[0,r_x)}(Z_l) $ is a truncated cone over $ Z_l $, i.e, the quotient space $ ([0,r_x)\times Z_l)/\sim$ with the equivalence relation $ (0, z_1) \sim (0, z_2) $ for all $ z_1, z_2\in Z_l $. $ Z_l $ is called the link of component of $ \Sigma^{l}(X) $.
		\item The local trivialization $ \varphi_x $, $ x\in \Sigma^l(X) $, can be glued together to a global trivialization of a neighborhood of each component of $ \Sigma^l(X) $. 
	\end{enumerate}
	
	\begin{remark}

		There is a natural stratification on $  \mathbb{R}^l\times C(Z_l) $ induced by $ Z_l $, that is, $ \Sigma^l(\mathbb{R}^l\times C(Z_l))= \mathbb{R}^l\times \{o \}$ and $ \Sigma^{l+k}(\mathbb{R}^l\times C(Z_l))= \mathbb{R}^{l}\times (0,+\infty) \times \Sigma^{k-1}(Z_l)$ for $ k\ge 1 $. The codimension of singular stratum $  \Sigma^l(\mathbb{R}^l\times C(Z_l)) $  (definition see below) is $ n-l $, and the others are the same as that of $ Z_l $.
	\end{remark}
	
	In this paper, we only consider complete stratified spaces with a length metric.
	
	By Condition (2) above, $ \Sigma^{n}(X) $ is open and dense in $ X $, which exclude the spaces such as the union of $ \mathbb{S}^3 $ and a segment at a common point. We call $ \Sigma^{n}(X) $ regular set, and its complement $ \Sigma(X)=\bigsqcup_{l_j=l_1}^{l_{k-1}}\Sigma^{l_j}(X) $ singular set, where $ \Sigma^{l}(X) $ is called the singular stratum of dimension $ l $ for $ l \le n-2 $. 
	By definition $ Z_l $  is connected, whose dimension $ n-l-1 $ is always no less than $ 1 $ by the assumption $ \Sigma^{n-1}(X)=\emptyset $. Hence, the spaces such as the union of two spheres $ \mathbb{S}^3 $ at a point are excluded.
	
	A stratified space $ X $ is called smooth if for any $ x\in \Sigma^l(X) $, the local chart $ \varphi_x $ is a smooth diffeomorphism over regular set $ (B_{r_x}(0_l)\times C_{[0,r_x)}(\Sigma^{l}(Z_l)))\setminus B_{r_x}(0_l)\times\{0\}$.

	\begin{example}\label{ex-stratified-space}
		Since $ \mathbb{R}^3=\mathbb{R}\times C(\mathbb{S}^1)=C(\mathbb{S}^2) $, the topological space $ X= \mathbb{R}^3 $ equipped with its canonical smooth structure admits many different stratification. 
		
		One is formed by $ \Sigma_A^3(X)=\left(\mathbb{R}\times C(\mathbb{S}^1)\right)\setminus \left(\mathbb{R}\times\{0\} \right) $ and $ \Sigma_A^1(X)=\mathbb{R}\times\{0\} $. Another consists of $ \Sigma_B^3(X)=C(\mathbb{S}^2)\setminus \left( \{o\} \right) $ and $ \Sigma_B^0(X)= \{o\} $. Both of the two stratified spaces $ X_A=(X,\Sigma_A^3(X)\bigsqcup \Sigma_A^1(X)) $ and $ X_B=(X,\Sigma_B^3(X)\bigsqcup \Sigma_B^0(X)) $ are smooth, but they are different.
		
		The link of $ X_A $ at any singular point is $ \mathbb{S}^1 $ and the link of $ X_B $ at its unique singular point is $ \mathbb{S}^2 $ which is the join $ \mathbb{S}^0*\mathbb{S}^1 $, where $ \mathbb{S}^0=\{0,\pi\} $ and the join $ X*Y $ of two topological spaces $ X,Y $ is the quotient space $ \left([0,\frac{\pi}{2}] \times X\times Y\right)/\sim $, where $ (0,x,y_1)\sim (0,x,y_2) $ and $ (\frac{\pi}{2},x_1,y)\sim (\frac{\pi}{2},x_2,y) $.
		
		Note that the join $ X*Y $ of two metric spaces $ (X,d_X) $ and $ (Y,d_Y) $ can be realized as a canonical spherical join $[0,\frac{\pi}{2}] \times_{\cos} X\times_{\sin} Y$ where the distance is defined by $$ \cos d\left((a_1, x_1, y_1), (a_2,x_2,y_2)\right)= \cos a_1  \cos a_2\cos d_X(x_1,x_2) + \sin a_1 \sin a_2 \cos d_Y(y_1,y_2) .$$ 
		Similarly, the cone $ C(X) $ over a metric space $ (X,d_X) $ can be realized as a Euclidean cone whose metric is defined by $$ d\left((r_1,x_1),(r_2,x_2)\right)^2=r_1^2+r_2^2-2r_1r_2\cos\left(\min\{d_X(x_1,x_2),\pi\}\right) .$$  
		Then the Euclidean cone $ C(X*Y) $ is isometric to $ C(X)\times C(Y) $ (c.f. \cite{Burago-Gromov-Perelman1992}).
	\end{example}

	An iterated edge metric $ g $ on a smooth stratified space $ X $ is a Riemannian metric on the regular set of $ X $, which is iteratively defined such that on each local chart around a singular point, $ g $ is appropriate asymptotically close to a warped product metric, i.e., 
	there exists constant $ C,\alpha>0 $ such that for each $ x$ in a component of $\Sigma^l(X) $ and  for some $ 0<\delta_x\le r_x $,
	\begin{equation}\label{def-iterated-metric}
		|\varphi_x^*g-(g_0+dr^2+r^2k_l)|\le Cr^\alpha \text{ on } (B_{\delta_x}(0_l)\times C_{[0,\delta_x)}(\Sigma^{l}(Z_l)))\setminus B_{\delta_x}(0_l)\times\{0\}, 
	\end{equation}
	where $ g_0 $ is a standard Riemannian metric on $ \mathbb{R}^l $, and $ k_l $ is an iterated edge metric on $ Z_l $. For $ \forall x\in  \sum^l(X)  $, there exists a unique tangent cone $ C(S_x) $ over the tangent sphere of $ x $ which is isometric to $ (\mathbb{R}^l \times C(Z_l),g_0+dr^2+r^2k_l) $.

	By definition, any iterated edge metric $ g $ is smooth on the regular stratum. But \eqref{def-iterated-metric} dose not implies that $ g $ is not smooth at a singular point. In fact, $ (X_A,g_B) $ in the following Example \ref{ex-iterated-metric} is smooth at most of the singular points.
	
	\begin{example}\label{ex-iterated-metric}
		We construct two iterated edge metrics $ g_A$, $ g_B $ on the stratified spaces $ X_A $ in Example \ref{ex-stratified-space}. 
		
		Let us first view $ X_A=\mathbb{R}^1\times C(\mathbb{S}_{\frac{\sqrt{2}}{2}}^1) $ equipped with the canonical Euclidean cone metric $ g_A=dx^2+d\rho^2+\frac{\rho^2}{2}d\theta^2 $, where $ x\in \mathbb{R}^1, (\rho,\theta)\in C(\mathbb{S}_{\frac{\sqrt{2}}{2}}^1), \theta \in [0,2\pi] $. 
		
		Next, we construct another iterated edge metric $ g_B $ on $ X_A $ by smoothing $ g_A $ such that for any $ x\ne0 $, each cone $ \{x\}\times C(\mathbb{S}_{\frac{\sqrt{2}}{2}}^1) $ is equipped with a smooth metric $ h_x$, which approaches $ h_0=d\rho^2+\frac{\rho^2}{2}d\theta^2 $ as $ x\to 0 $, and the blow up of $ g_B $ at $ (0,o)\in \mathbb{R}^1\times C(\mathbb{S}_{\frac{\sqrt{2}}{2}}^1) $ is $ g_A $, where $ o $ is the vertex of $ C(\mathbb{S}_{\frac{\sqrt{2}}{2}}^1) $.
		In fact, $ X_A $ can be embedded to $ \mathbb{R}^4 $ as a generalized parabolic hyper-surface $ x_4=\sqrt{x_1^4+x_2^2+x_3^2} $, where $ (x_1,x_2,x_3,x_4)\in \mathbb{R}^4 $ satisfies $ x_1=x $, $ x_2=r\cos \theta $, $ x_3=r\sin \theta $. Then, the pull back metric is $ g_B $ whose restriction on each cone $ \{x\}\times C(\mathbb{S}_{\frac{\sqrt{2}}{2}}^1) $ is $ h_x=d\rho^2+r^2(\rho,x)d\theta^2 $,  where $ \rho=\int_{0}^{r}\sqrt{1+\left(\partial_r(\sqrt{r^2+x^4})\right)^2}\op{d}r $. After blowing up the generalized parabolic hyper-surface at the origin in $ \mathbb{R}^4 $, it becomes $ (\mathbb{R}^1\times C(\mathbb{S}_{\frac{\sqrt{2}}{2}}^1),g_A) $.
		
		Furthermore, because $ x_4=\sqrt{x_1^4+x_2^2+x_3^2} $ is a convex function, $ (X_A,g_B) $ is an Alexandrov space with curvature $ \ge 0 $ which contains only one singular point $ (0,0,0,0) $. The tangent cone of $ (X_A,g_B) $ at $ (0,0,0,0) $ is isometric to $ (X_A,g_A) $. It is clear that $ g_B $ is an iterated edge metric on $ X_A $, which is smooth on singular stratum $ \Sigma_A^1(X)\setminus \left( \{0\}\times\{0\} \right) $.
		
		In the following, we show that $ g_B $ is not an iterated edge metric on $ X_B $.
		Observe that, $ \mathbb{R}^1\times C(\mathbb{S}_{\frac{\sqrt{2}}{2}}^1)=C(S(\mathbb{S}_{\frac{\sqrt{2}}{2}}^1)) $, where $ S(\mathbb{S}_{\frac{\sqrt{2}}{2}}^1) $ is the suspension of $ \mathbb{S}_{\frac{\sqrt{2}}{2}}^1 $, i.e., $ \left([0,\pi]\times \mathbb{S}_{\frac{\sqrt{2}}{2}}^1\right)/ \sim $, $ (0,y_1) \sim(0,y_2) $ and $ (\pi,y_1) \sim(\pi,y_2) $, and the metric $$ g_A=dx^2+d\rho^2+\frac{\rho^2}{2}d\theta^2= d\alpha^2+\alpha^2 \left(d\beta^2+\frac{\sin^2\beta }{2}d\theta^2\right) ,$$ where  $ x=\alpha\cos \beta $, $ \rho=\alpha\sin \beta $. Hence, $ g_B $ is appropriate asymptotically close to $ g_A $ at $ \{0\}\times\{0\} $ on $ C(S(\mathbb{S}_{\frac{\sqrt{2}}{2}}^1))\setminus \mathbb{R}^1\times \left(\{0\}\times\{0\}\right) $, where $ \{0\}\times\{0\} $ is the unique singular point in $ X_B $. The link of $ \Sigma_B^0(X) $ is $ S(\mathbb{S}_{\frac{\sqrt{2}}{2}}^1) $ whose strata are $ \Sigma^2(S(\mathbb{S}_{\frac{\sqrt{2}}{2}}^1))=S(\mathbb{S}_{\frac{\sqrt{2}}{2}}^1)\setminus \{(0,y),(\pi,y)\} $ and $ \Sigma^0(S(\mathbb{S}_{\frac{\sqrt{2}}{2}}^1))=\{(0,y),(\pi,y)\} $, where the iterated edge metric on the link is $ d\beta^2+\frac{\sin^2\beta }{2}d\theta^2 $. It follows that the natural induced stratification on $ C(S(\mathbb{S}_{\frac{\sqrt{2}}{2}}^1)) $ has $ 1 $-dimensional singular stratum. Therefore, the trivialization of $ X_B $ does not preserve the strata of $ X_B $, and $ g_B $ is not an iterated edge metric on $ X_B $.
		
		Meanwhile, $ g_A $ is not an iterated metric on $ X_B $, because $ g_A $ is not smooth at those regular points in $ (0,+\infty)\times \{(0,y),(\pi,y)\}\subset \Sigma^3(X_B) $.
	\end{example}
	
	Let $ (X,g) $ be a compact stratified $ n $-space $ X $ with an iterated edge metric $ g $. For any $ x\in \Sigma^{n-2}(X) $, if it is no empty, the tangent cone $ C(S_x) $ is isometric to $ (\mathbb{R}^{n-2} \times C(\mathbb{S}^1),g_0+dr^2+r^2(\frac{\alpha_x}{2\pi})d\theta^2) $, where $ \alpha_x\in (0,+\infty) $ is called the angle of $ \Sigma^{n-2}(X) $ at $ x $. 
	
	By Bertrand-Ketterer-Mondello-Richard \cite[Theorem A]{BKMR2021}, if the Ricci curvature satisfies $ \op{Ric}_g\ge K $ on the regular set and $ \alpha_x\le 2\pi $ for any $ x\in\Sigma^{n-2}(X) $, then $ (X,d_g,\mathcal{H}^n) $ is a $ RCD(K,n) $ metric measure space, where $ d_g $ is the distance endowed by $ g $, and $ \mathcal{H}^n $ is the $ n $-dimensional Hausdorff measure.
	
	\begin{definition}\label{def-Einstein-stratified-space}
		A $ n $-dimensional stratified space $ (X,g) $ with an iterated edge metric $ g $ is called \emph{$ \lambda $-Einstein}, if it satisfies the following two conditions:
		\begin{enumerate}
			\item $ \op{Ric}_g=\lambda g $ on the regular set for some $ \lambda\in \mathbb{R} $;
			\item  $ \Sigma^{n-2}(X)=\emptyset  $.
		\end{enumerate}
	\end{definition}
	
	In particular, by \cite[Theorem A]{BKMR2021} a compact $ n $-dimensional $ \lambda $-Einstein stratified space is a $ RCD(\lambda,n) $ metric measure space.
	
	Condition (2) required in the above definition is partially motivated by the codimension $ 4 $ theorem by Cheeger-Naber \cite{Cheeger-Naber2015} (i.e., the singular set in a noncollapsed limit space of Riemannian $ n $-manifolds $ (M_i,g_i) $ satisfying $ |\op{Ric}_{g_i}|\le n-1 $ and $ \op{Vol}(B_1(p_i)) >v>0 $ for all $ p_i\in M_i $ has Hausdorff dimension $ \le n-4 $), and the following Theorem \ref{thm-sphere-stratified} for Einstein stratified spaces proved by Honda-Mondello \cite{Honda-Mondello}.

	\begin{theorem}[Sphere theorem for Einstein stratified spaces {\cite[Theorem 3.3]{Honda-Mondello}}]\label{thm-sphere-stratified}
	Given integer $ n\ge 2 $, there exists $ \epsilon(n)>0 $ such that if a compact $ (n-1) $-Einstein stratified space $ (X,g) $ of dimension $ n $ satisfies
	$$ \frac{\mathcal{H}^n(X,g)}{\mathcal{H}^n (\mathbb{S}^n)}\ge 1-\epsilon(n), $$
	then $ (X,g) $ is isometric to a round sphere $ \mathbb{S}^n $.
	\end{theorem}
	
	The rigidity in Theorem \ref{thm-sphere-stratified} fails for those stratified spaces with iterated edge metric satisfying the condition (1) in Definition \ref{def-Einstein-stratified-space} in the absence of (2); see \cite[Remark 3.4]{Honda-Mondello} for a counter example.
	
	In the end of this paper, we point out that all the main theorems hold for singular Einstein spaces, such as Einstein stratified spaces and noncollapsed Ricci limit spaces of Einstein $ n $-manifolds.
	
	\begin{theorem}\label{Thm-stratified-rigidity}
	Theorems \ref{thm-rigidity-almostflat}, \ref{thm-gap}, \ref{thm-rigid-localrewindingvol} hold for compact $ \lambda $-Einstein stratified spaces $ (X,g) $ of dimension $ n $.
	\end{theorem}

	\begin{theorem}\label{thm-Ric-limit-rigidity}
	Theorems \ref{thm-rigidity-almostflat}, \ref{thm-gap}, \ref{thm-rigid-localrewindingvol} hold for noncollapsed Ricci limit spaces $ X $ of $ n $-manifolds $ (M,g) $ with bounded Ricci curvature $ |\op{Ric}_g|\le n-1 $, provided that the Riemannian metric $ h $ on the regular set of $ X $ satisfies $ \op{Ric}_h=\lambda h $.
	\end{theorem}
	
	Since the proof of Theorem \ref{thm-Ric-limit-rigidity} is similar to Theorem \ref{thm-sphere-stratified} and Theorem \ref{Thm-stratified-rigidity}, we only prove Theorem \ref{Thm-stratified-rigidity}.
	
	\begin{proof}[Proof of Theorem \ref{Thm-stratified-rigidity}]
	~
	
	It suffices to show that if a compact $ \lambda $-Einstein stratified space $ (X,g) $ satisfies the conditions in each of Theorems \ref{thm-rigidity-almostflat}, \ref{thm-gap}, \ref{thm-rigid-localrewindingvol}, then it is a smooth manifold without singular strata. Equivalently, any point $ x\in X $ is regular. 
	
	Observe that, if a stratified space $ (X,g) $ with an iterated edge metric satisfies $ \Sigma^{n-2}=\emptyset $, then the unit tangent sphere $ (S_x,h_x) $ at any $ x\in X $ is also $ \Sigma^{(n-1)-2}(S_x)=\emptyset $; see \cite[Lemma 3.1]{Honda-Mondello}. By calculating the Ricci curvature of the warped product metric, the unit tangent sphere $ (S_x,h_x) $ is a $ (n-1) $-Einstein stratified space. Since any point $ x\in(X,g) $ is regular if and only if $ (S_x,h_x) $ is isometric to a round sphere,  by Theorem \ref{thm-sphere-stratified} we only need to show that the unit tangent sphere $ (S_x,h_x) $ at $ x $ has almost maximal volume, i.e., 
	\begin{equation}\label{equ-almost-volume}
		\mathcal{H}^{n-1}(S_x)\ge (1-\epsilon(n-1))\mathcal{H}^{n-1} (\mathbb{S}^{n-1}).
	\end{equation}
	
	In the following, we verify \eqref{equ-almost-volume} under the conditions in Theorems \ref{thm-rigidity-almostflat}, \ref{thm-gap}, \ref{thm-rigid-localrewindingvol} one by one.
	
	In the case of Theorem \ref{thm-rigid-localrewindingvol}, because the universal cover $ \widetilde{B_\rho(x)} $ of $ B_\rho(x) $ satisfies $ \mathcal{H}^n(B_\rho(x^*))\ge (1-\epsilon)\mathcal{H}^n(B_\rho^H) $, we derive $ \mathcal{H}^n(B_s(x^*))\ge (1-\epsilon)\mathcal{H}^n(B_s^H) $, $ \forall s<\rho $ by Bishop-Gromov inequalities. 
	By \eqref{def-iterated-metric} in the definition of iterated edge metric, the Hausdorff measure $ \mathcal{H}^n(B_1(x^*),s^{-2}g) $ of the unit ball in $ (X,s^{-2}g,x^*) $ converges to the Hausdorff measure $ \mathcal{H}^n(B_1(o)) $ of the unit ball in $ (C(S_{x^*}), dr^2+r^2h_{x^*},\mathcal{H}^n,o) $ as $ s\to 0 $.
	Thus, we have $ \mathcal{H}^n(B_1(o))\ge (1-\epsilon)\mathcal{H}^n(B_1^0) $, which implies
	$$ \mathcal{H}^{n-1}(S_{x^*})=n\mathcal{H}^n(B_1(o)) \ge n(1-\epsilon)\mathcal{H}^n(B_1^0) = (1-\epsilon)\mathcal{H}^{n-1}(\mathbb{S}^{n-1}).$$
	
	In the case of Theorem \ref{thm-gap}, $ (X,g,\mathcal{H}^n) $ is homeomorphic and $ \varkappa(\epsilon|n,D)  $-measured-Gromov-Hausdorff close to a hyperbolic $ n $-manifold $ \mathbb{H}^n/\Gamma $ by Chen-Xu \cite[Theorem 1.1]{CX2024}. So its universal cover $ \tilde{X} $ satisfies $ \mathcal{H}^n(B_r(\tilde{x}))\ge (1-\varkappa(\epsilon|n,D))\mathcal{H}^n(B_r^{-1}) $ for all $ x\in X $ and all $ r\in(0,1) $, where $ \tilde{x} $ is a preimage point of $ x $ in $ \tilde{X} $. By the same argument in the above case of Theorem \ref{thm-rigid-localrewindingvol}, the unit tangent sphere satisfies \eqref{equ-almost-volume}.
	
	In the case of Theorem  \ref{thm-rigidity-almostflat}, first note that it was proved by Zamora-Zhu \cite{Zamora-Zhu2024} that
	Theorem \ref{thm-almostflat} still holds for any noncollapsed $ RCD(-(n-1),n) $ metric measure spaces after replacing the diffeomorphism condition with homeomorphism in condition (1). 
	
	In fact, under the conditions of Theorem \ref{thm-almostflat} if a noncollapsed $ RCD(-(n-1),n) $ metric measure space $ (X,d,\mathcal{H}^n) $ satisfies (2), then by the argument of \cite[Theorem 4.1]{Zamora-Zhu2024} it also satisfies (3).
	In particular, after rescaling the metric to $ \epsilon^{-\frac{1}{2}} d $, the universal cover $ (\tilde{X},\tilde{x},\epsilon^{-\frac{1}{2}}d_{\tilde{X}},\mathcal{H}^n) $ is $ \varkappa(\epsilon|n) $-close to Euclidean space $ (\mathbb{R}^n,0,g_{\mathbb{R}^n},\mathcal{H}^n) $ in the pointed measured Gromov-Hausdorff distance, where $ \epsilon $ is the diameter of $ (X,d,\mathcal{H}^n) $, and $ d_{\tilde{X}} $ is the pull back metric on the universal cover of $ (X,d,\mathcal{H}^n) $. Hence, the volume of unit ball $ B_1(o) $ on the tangent cone $ C(S_x) $ at any $ x\in X $ is almost maximal.
	\end{proof}

	\section{Open Questions}
	The main theorems of this article lead to several open questions.
	
	Firstly, since $ \epsilon(n) $ in Corollary \ref{cor-almost-flat-Einstein-mfd} is apriorily smaller than that in Gromov's theorem on almost flat manifolds, it is natural to ask the following.
	\begin{problem}\label{problem-1}
	Is there any Einstein metric with negative scalar curvature on a nilmanifold or infra-nilmanifold of dimension $ n\ge 5 $?
	\end{problem}
	What is known about Problem \ref{problem-1} is the case that $n=2,3,4$ where Einstein structure must be flat; see \cite{Besse1987}.
	
	Secondly, let us consider the rescaling invariant $ \op{diam}^2\cdot \op{Ric} $ of an Einstein Riemannian $ n $-manifold. By the proof of Theorem \ref{thm-rigidity-almostflat}, we have the following corollary.
	\begin{corollary}\label{cor-diam-bounded}
	For any $ n\ge 2 $ and $ v>0 $, there is $ D(n,v)>0 $ such that for any Einstein $n$-manifold
	$(M,g)$ with $\op{Ric}_g=-(n-1) g$, if $\op{vol} B_1(\tilde x)\ge v>0$ at a point $ \tilde{x} $ in the Riemannian universal cover $ \tilde{M} $ of $ M $, then $ \op{diam}(M,g)\ge D(n,v) $.
	\end{corollary}
	
	By Gromov \cite[Section 4, Appendix E]{Gromov1983} and \cite{Gromov1982}, if $ (M,g) $ is homeomorphic to a closed aspherical manifold and satisfies $ \op{Ric}_g\ge -(n-1) $ and $ \op{diam}(M,g)\le D $, then $ \op{vol}(B_1(\tilde{x})) \ge c(n,D) $, for any $ \tilde{x}\in \tilde{M} $. Combining with Corollary \ref{cor-diam-bounded}, we have
	
	\begin{corollary}\label{cor-rescaling-invariant}
	For any compact Einstein and aspherical $ n $-manifold $ (M,g) $ with $\op{Ric}_g=\lambda g$, either $ \op{diam}(M,g)^2\cdot \lambda \le -D(n)^2<0 $ or $ (M,g) $ is flat.
	\end{corollary}
	
	Note that there are many compact Einstein $ n $-solvmanifolds with negative scalar curvature satisfying Corollary \ref{cor-rescaling-invariant}; see \cite{Alekseevskiui1975} \cite{Heber1998} \cite{Bohm-Lafuente} \cite{Lauret2010}.
	
	Lastly, it is worth mentioning the following problem raised by Thomas Schick.
	\begin{problem}\label{problem-2}
	Let $ D(n,v) $ be the infimum of the diameters $ \op{diam}(M,g) $ of all Einstein $n$-manifolds
	$(M,g)$ such that $\op{Ric}_g=-(n-1) g$ and $\op{vol} B_1(\tilde x)\ge v>0$ for some $ \tilde{x} \in \tilde{M} $. What kind of such Einstein manifold $ (M,g) $ realizing the extremal diameter $ D(n,v) $ can be?
	\end{problem}
	It is natural to conjecture that only solvmanifolds can realize such extremal diameter.


\begin{thebibliography}{10}
	
	\bibitem{ALMP12}
	P.~Albin, \'E. Leichtnam, R.~Mazzeo, and P.~Piazza.
	\newblock The signature package on {W}itt spaces.
	\newblock {\em Ann. Sci. \'Ec. Norm. Sup\'er. (4)}, 45(2):241--310, 2012.
	
	\bibitem{Alekseevskiui1975}
	D.~V. Alekseevski\u~i.
	\newblock Classification of quaternionic spaces with transitive solvable group
	of motions.
	\newblock {\em Izv. Akad. Nauk SSSR Ser. Mat.}, 39(2):315--362, 472, 1975.
	
	\bibitem{Anderson1990}
	M.~T. Anderson.
	\newblock Convergence and rigidity of manifolds under {R}icci curvature bounds.
	\newblock {\em Invent. Math.}, 102(1):429--445, 1990.
	
	\bibitem{BKMR2021}
	J.~Bertrand, C.~Ketterer, I.~Mondello, and T.~Richard.
	\newblock Stratified spaces and synthetic {R}icci curvature bounds.
	\newblock {\em Ann. Inst. Fourier (Grenoble)}, 71(1):123--173, 2021.
	
	\bibitem{Besse1987}
	L.~Besse.
	\newblock {\em Einstein Manifolds}.
	\newblock Classics in Mathematics. Springer Berlin, Heidelberg, 1987.
	
	\bibitem{Bohm-Lafuente}
	C.~B\"ohm and R.~A. Lafuente.
	\newblock Homogeneous {E}instein metrics on {E}uclidean spaces are {E}instein
	solvmanifolds.
	\newblock {\em Geom. Topol.}, 26(2):899--936, 2022.
	
	\bibitem{Burago-Gromov-Perelman1992}
	Y.~Burago, M.~Gromov, and G.~Perel'man.
	\newblock A.{D}. {A}lexandrov spaces with curvature bounded below.
	\newblock {\em Russian Mathematical Surveys}, 47(2):1--58, 1992.
	
	\bibitem{Cheeger67}
	J.~Cheeger.
	\newblock {\em Comparsion and finiteness theorems for Riemannian manifolds}.
	\newblock PhD thesis, Princeton University, 1967.
	
	\bibitem{Cheeger-Colding96}
	J.~Cheeger and T.~H. Colding.
	\newblock Lower bounds on {Ricci} curvature and the almost rigidity of warped
	products.
	\newblock {\em Ann. of Math. (2)}, 144(1):189--237, 1996.
	
	\bibitem{CC1997I}
	J.~Cheeger and T.~H. Colding.
	\newblock On the structure of spaces with {R}icci curvature bounded below. {I}.
	\newblock {\em J. Differential Geom.}, 46(3):406--480, 1997.
	
	\bibitem{Cheeger-Gromoll1971}
	J.~Cheeger and D.~Gromoll.
	\newblock The splitting theorem for manifolds of nonnegative {R}icci curvature.
	\newblock {\em J. Differential Geom.}, 6:119--128, 1971.
	
	\bibitem{Cheeger-Gromoll72}
	J.~Cheeger and D.~Gromoll.
	\newblock On the structure of complete manifolds of nonnegative curvature.
	\newblock {\em Ann. of Math. (2)}, 96:413--443, 1972.
	
	\bibitem{Cheeger-Naber2015}
	J.~Cheeger and A.~Naber.
	\newblock Regularity of {E}instein manifolds and the codimension 4 conjecture.
	\newblock {\em Ann. of Math. (2)}, 182(3):1093--1165, 2015.
	
	\bibitem{CRX2017}
	L.~Chen, X.~Rong, and S.~Xu.
	\newblock Quantitative volume space form rigidity under lower {R}icci curvature
	bound {II}.
	\newblock {\em Trans. Amer. Math. Soc.}, 370(6):4509--4523, 2018.
	
	\bibitem{CRX2019}
	L.~Chen, X.~Rong, and S.~Xu.
	\newblock Quantitative volume space form rigidity under lower {R}icci curvature
	bound {I}.
	\newblock {\em J. Differential Geom.}, 113(2):227--272, 2019.
	
	\bibitem{CX2024}
	L.~Chen and S.~Xu.
	\newblock Quantitative rigidity of almost maximal volume entropy for both
	{RCD*} spaces and integral {R}icci curvature bound.
	\newblock {\em Adv. Math.}, 441:Paper No. 109543, 22, 2024.
	
	\bibitem{Colding1996}
	T.~H. Colding.
	\newblock Large manifolds with positive {R}icci curvature.
	\newblock {\em Invent. Math.}, 124(1):193--214, 1996.
	
	\bibitem{Colding1997}
	T.~H. Colding.
	\newblock Ricci curvature and volume convergence.
	\newblock {\em Ann. of Math.}, 145(3):477--501, 1997.
	
	\bibitem{Dong-Chen}
	C.~Dong and X.~Chen.
	\newblock Volume gap theorems for {R}icci nonnegative metrics and {E}instein
	metrics.
	\newblock to appear in Journal of Mathematical Study, 2024.
	
	\bibitem{FY1992}
	K.~Fukaya and T.~Yamaguchi.
	\newblock The fundamental groups of almost nonnegatively curved manifolds.
	\newblock {\em Ann. of Math.}, 136(2):253--333, 1992.
	
	\bibitem{Gromov1978}
	M.~Gromov.
	\newblock Almost flat manifolds.
	\newblock {\em J. Differential Geom.}, 13(2):231--241, 1978.
	
	\bibitem{GLP1981}
	M.~Gromov.
	\newblock {\em Structures m\`etriques pour les vari\`et\`es {R}iemanniennes.
		(French) [Metric structures for {R}iemann manifolds] Edited by J. Lafontaine
		and P. Pansu.}
	\newblock Textes Math\'ematiques [Mathematical Texts], 1. CEDIC, Paris, 1981.
	
	\bibitem{Gromov1982}
	M.~Gromov.
	\newblock Volume and bounded cohomology.
	\newblock {\em Inst. Hautes \'Etudes Sci. Publ. Math.}, (56):5--99, 1982.
	
	\bibitem{Gromov1983}
	M.~Gromov.
	\newblock Filling {R}iemannian manifolds.
	\newblock {\em J. Differential Geom.}, 18(1):1--147, 1983.
	
	\bibitem{Gromov-Thurston87}
	M.~Gromov and W.~Thurston.
	\newblock Pinching constants for hyperbolic manifolds.
	\newblock {\em Invent. Math.}, 89(1):1--12, 1987.
	
	\bibitem{Heber1998}
	J.~Heber.
	\newblock Noncompact homogeneous {E}instein spaces.
	\newblock {\em Invent. math.}, 133:279--352, 1998.
	
	\bibitem{Heintze}
	E.~Heintze.
	\newblock {\em Manningfaltigkeiten negativer Kriimmung}.
	\newblock PhD thesis, Universit{\'i}t Bonn Habilitationsschrift, 1976.
	
	\bibitem{Honda-Mondello}
	S.~Honda and I.~Mondello.
	\newblock Sphere theorems for {RCD} and stratified spaces.
	\newblock {\em Ann. Sc. Norm. Super. Pisa Cl. Sci.}, 22(2):903--923, 2021.
	
	\bibitem{HKRX2020}
	H.~Huang, L.~Kong, X.~Rong, and S.~Xu.
	\newblock Collapsed manifolds with {R}icci bounded covering geometry.
	\newblock {\em Trans. Amer. Math. Soc.}, 373(11):8039--8057, 2020.
	
	\bibitem{HRW2020}
	S.~Huang, X.~Rong, and B.~Wang.
	\newblock Collapsing geometry with {R}icci curvature bounded below and {R}icci
	flow smoothing.
	\newblock {\em Symmetry, Integrability and Geometry: Methods and Applications},
	16:123, 2020.
	
	\bibitem{KW}
	V.~Kapovitch and B.~Wilking.
	\newblock Structure of fundamental groups of manifolds with {R}icci curvature
	bounded below.
	\newblock arXiv preprint arXiv:1105.5955, 2011.
	
	\bibitem{Koiso1978}
	N.~Koiso.
	\newblock Nondeformability of {E}instein metrics.
	\newblock {\em Osaka J. Math.}, 15(2):419--433, 1978.
	
	\bibitem{Lauret2010}
	J.~Lauret.
	\newblock Einstein solvmanifolds are standard.
	\newblock {\em Ann. of Math.}, 172:1859--1877, 2010.
	
	\bibitem{Ledrappier-Wang2010}
	F.~Ledrappier and X.~Wang.
	\newblock An integral formula for the volume entropy with application to
	rigidity.
	\newblock {\em J. Differential Geom.}, 85:461--477, 2010.
	
	\bibitem{Liu2011}
	G.~Liu.
	\newblock A short proof to the rigidity of volume entropy.
	\newblock {\em Math. Res. Lett.}, 18(1):151--153, 2011.
	
	\bibitem{Manning1979}
	A.~Manning.
	\newblock Topological entropy for geodesic flows.
	\newblock {\em Ann. of Math.}, 2:567--573, 1979.
	
	\bibitem{Milnor1976}
	J.~Milnor.
	\newblock Curvatures of left invariant metrics on {L}ie groups.
	\newblock {\em Adv. Math.}, 21:293--329, 1976.
	
	\bibitem{NaberZhang2016}
	A.~Naber and R.~Zhang.
	\newblock Topology and $\epsilon$-regularity theorems on collapsed manifolds
	with {R}icci curvature bounds.
	\newblock {\em Geom. Topol.}, 20(5):2575--2664, 2016.
	
	\bibitem{Perelman1994}
	G.~Perelman.
	\newblock {Proof of the soul conjecture of Cheeger and Gromoll}.
	\newblock {\em J. Differential Geom.}, 40(1):209--212, 1994.
	
	\bibitem{Petersen}
	P.~Petersen.
	\newblock {\em Riemannian Geometry, Third Edition}.
	\newblock Graduate Text in Mathematics. Springer International Publishing AG,
	2016.
	
	\bibitem{Rong2010}
	X.~Rong.
	\newblock Convergence and collapsing theorems in {R}iemannian geometry.
	\newblock In {\em Handbook of Geometry Analysis Vol. II}, volume ALM 13, pages
	193--228. Higher Education Press and International Press, Beijing-Boston,
	2010.
	
	\bibitem{Ruh1982}
	E.~A. Ruh.
	\newblock Almost flat manifolds.
	\newblock {\em J. Differential Geom.}, 17(1):1--14, 1982.
	
	\bibitem{Schroeder-Shah2018}
	V.~Schroeder and H.~Shah.
	\newblock Almost maximal volume entropy.
	\newblock {\em Arch. Math.}, 110:515--521, 2018.
	
	\bibitem{Tuschmann1995}
	W.~Tuschmann.
	\newblock Hausdorff convergence and the fundamental group.
	\newblock {\em Math. Z.}, 218(2):207--211, 1995.
	
	\bibitem{Zamora-Zhu2024}
	S.~Zamora and X.~Zhu.
	\newblock Topological rigidity of small {RCD (K, N)} spaces with maximal rank.
	\newblock {\em arXiv preprint arXiv:2406.10189}, 2024.
	
	\end{thebibliography}

\end{document}